\numberwithin{equation}{section}
\newtheorem{theorem}{Theorem}[section]
\newtheorem{proposition}[theorem]{Proposition}
\theoremstyle{definition}
\newtheorem{definition}[theorem]{Definition}
\newtheorem{example}[theorem]{Example}
\theoremstyle{remark}
\newcommand{\CC}{\mathbb{C}}
\newcommand{\HH}{\mathbb{H}}
\newcommand{\ZZ}{\mathbb{Z}}
\numberwithin{equation}{section}
\begin{document}
\title{Singular polynomials for the rational Cherednik algebra for $G(r,1,2)$}

\author{Armin Gusenbauer}
\address{Universidad de Talca} 
\email{armingk@inst-mat.utalca.cl}

\footnote{I acknowledge the financial support of Conicyt Beca Doctorado Nacional folio 21120504. }
\footnote{Keywords: Cherednik algebras, Dunkl operators, Singular polynomials, Highest weight category.}

\begin{abstract}
We study the rational Cherednik algebra attached to the complex reflection group $G(r,1,2)$. Each irreducible representation
$S^{\lambda}$ of $G(r,1,2)$ corresponds to a standard module $\Delta(\lambda)$ for the rational Cherednik algebra. We give necessary and sufficient conditions for the existence of morphism between two of these modules and explicit formulas for them when they exist.
\end{abstract}

\maketitle
\section{Introduction}
 The rational Cherednik algebra $\mathbb{H}$ is an algebra attached to a complex reflection group $W$, depending on a set of parameters indexed by the conjugacy
classes of reflection in $W$. The algebra $\mathbb{H}$ possesses a triangular decomposition (\cite{D} and \cite{pEvG}) allowing the construction of induced modules called standard modules, and the Serre subcategory of $\HH$-mod generated by these, category $\mathcal{O}$, has been the object of intense study during the last fifteen years. Part of the structure of the category $\mathcal{O}$ is encoded by the homomorphisms between standard modules, and the classification and construction of these homomorphisms seems to be a difficult problem.

The first work on this problem is due to Dunkl \cite{Dun1}, \cite{Dun2}, who solved it for $W=S_n$ the symmetric group and codomain the standard module parabolically induced from the trivial representation. Subsequently Griffeth \cite{SG1} solved it for $W=G(r,1,n)$, but with a certain genericity condition in the parameters.  We will specialize to $W=G(r,1,2)$ and solve the problem without any restriction on the parameters.

The parameters space for $W=G(r,1,2)$ is $r$-dimensional with coordinates $c_0,d_0,d_1,...,d_{r-1}$ subject to the requirement
\begin{align*}
d_0+d_1+d_2+...+d_{r-1}=0.
\end{align*}
The irreducible representations of $G(r,1,n)$ are indexed by $r$-partitions of $n$. So for $n=2$ there are three kinds of irreducible representations $\left\{\lambda_i, \lambda^i , \lambda_{i,j} | 0\leq i\neq j \leq r-1\right\} $.
Our main theorem gives necessary and sufficient conditions for the existence of morphisms between the corresponding standard modules (see Theorem \ref{th1}).



For the necessary conditions we start by using Theorem 5.1 of \cite{GGAD}. For the sufficient conditions we construct the morphisms explicitly. This amounts to finding elements of the codomain that are annihilated by the Dunkl operators. In other words, we are looking for a generalized version of singular polynomials.

 For $G(r,1,2)$ the dimension of the homomorphism space between two standard modules is always at most two. The next theorem gives sufficient conditions for the dimension to be equal to 2 (we suspect that this the only way this can happen).
\begin{theorem}
If we have the conditions \begin{center}\begin{itemize}
                                          \item $d_i-d_k+c_0r=i-k+m_1r>0$
                                          \item $d_i-d_k-c_0r=i-k+m_2r>0$
                                          \item $d_j-d_i+c_0r=j-i+m_3r>0$
                                          \item $d_j-d_i-c_0r=j-i+m_4r>0$
                                        \end{itemize}
                                        \end{center}
where $m_i$ is a integer for $i=1,2,3,4$, then we have $$\mathrm{Dim}(\mathrm{Hom}(\Delta(\lambda_{i,k}),\Delta(\lambda_{i,j})))=2.$$
\end{theorem}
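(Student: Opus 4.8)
The plan is to identify the Hom-space with a space of singular polynomials and then to write down two independent elements of it. Recall that as a graded vector space $\Delta(\lambda_{i,j})\cong\CC[x_1,x_2]\otimes S^{\lambda_{i,j}}$, with lowest weight space $S^{\lambda_{i,j}}$ sitting in degree $0$. A morphism $\Delta(\lambda_{i,k})\to\Delta(\lambda_{i,j})$ is the same datum as a $G(r,1,2)$-equivariant map from the lowest weight space $S^{\lambda_{i,k}}$ into the positive-degree part of $\Delta(\lambda_{i,j})$ whose image is annihilated by the Dunkl operators $y_1,y_2$. Since $S^{\lambda_{i,k}}$ is irreducible, Schur's lemma shows that $\dim\Hom(\Delta(\lambda_{i,k}),\Delta(\lambda_{i,j}))$ equals the multiplicity of $S^{\lambda_{i,k}}$ among the singular polynomials in $\Delta(\lambda_{i,j})$. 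As the excerpt already records that this dimension is at most $2$, it suffices to exhibit two linearly independent singular vectors of type $S^{\lambda_{i,k}}$.

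To find them I would work in the basis of $\Delta(\lambda_{i,j})$ given by the non-symmetric Jack (standard-tableau) vectors $f_T$, on which the commuting Cherednik operators act diagonally with eigenvalues recorded by the charged contents of the boxes. In this language the four hypotheses acquire a transparent meaning: the quantities $d_i-d_k\pm c_0r$ are the two charged-content differences between the components $i$ and $k$ (one for each standard tableau, the $\pm c_0r$ reflecting the two ways of distributing the entries $1,2$), and likewise $d_j-d_i\pm c_0r$ between $j$ and $i$. The assertion that each of these equals a positive integer with the prescribed residue modulo $r$ is exactly the condition that the associated intertwining (box-moving) operator is defined, is nonzero, and raises the polynomial degree by that integer.

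I would then build each singular vector as a word in these intertwiners applied to the lowest weight of $\Delta(\lambda_{i,j})$, the two natural routes from the shape $\{i,j\}$ to the shape $\{i,k\}$ being distinguished by the sign of $c_0$: one route uses the pair $d_i-d_k+c_0r$ and $d_j-d_i-c_0r$, the other uses $d_i-d_k-c_0r$ and $d_j-d_i+c_0r$. A small but important observation is that both routes land in the same total degree, namely $d_j-d_k$, because the $c_0r$ contributions cancel in each sum; this is forced in any case by the fact that the degree of a morphism between standard modules is pinned down by the difference of the lowest $\h$-eigenvalues. The strict positivity in all four inequalities is what guarantees that both intertwiner words are defined and do not vanish, so that each route produces a genuine nonzero singular vector of type $S^{\lambda_{i,k}}$.

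The main obstacle is therefore not degree bookkeeping but linear independence: since the two candidate vectors $v_1,v_2$ sit in the same degree $d_j-d_k$, I cannot separate them by degree and must compare them directly. I expect to do this by expanding $v_1$ and $v_2$ in the $f_T$ basis and checking that their components along the two standard tableaux of $\lambda_{i,k}$ are not proportional; equivalently, that the two box-moving routes act by genuinely different scalars on the two tableaux, which again traces back to the $\pm c_0r$ discrepancy. Establishing this non-proportionality, together with the nonvanishing above, gives $\dim\Hom\ge 2$, and the a priori upper bound then forces equality, completing the proof.
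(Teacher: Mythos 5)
Your overall skeleton matches the paper's more closely than you might expect: you pair the four hypotheses into two ``routes'' exactly as the paper does (your route through $d_i-d_k+c_0r$ and $d_j-d_i-c_0r$ is precisely the paper's composition $\Delta(\lambda_{i,k})\to\Delta(\lambda_i)\to\Delta(\lambda_{i,j})$, and your other route is its composition through $\Delta(\lambda^i)$), you invoke the same upper bound of $2$, and you correctly observe that both routes land in degree $d_j-d_k$. The gaps are in the implementation. First, the machinery you propose --- the non-symmetric Jack/standard-tableau basis $f_T$ and box-moving intertwiners --- is exactly the machinery that requires generic parameters, while the hypotheses of this theorem force maximally non-generic ones: subtracting the first two conditions gives $2c_0r=(m_1-m_2)r$, so $c_0$ is an integer or half-integer, and all the relevant charged-content differences are integers. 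At such resonant values the $f_T$ can have poles and the intertwiners can vanish or blow up, so your assertion that the four conditions are ``exactly the condition that the associated intertwining operator is defined, is nonzero'' is unjustified and in general false. Removing this genericity assumption (present in Griffeth's work cited in the introduction) is the entire point of the paper; it is why Section 3 re-derives the singular polynomials in explicit form ``in order to deal with the poles that may appear,'' including the renormalizations $(c_0\mp m)\,p(x_1,x_2)$ when denominators degenerate.

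Second, the step you yourself flag as the main obstacle --- linear independence of the two candidate singular vectors, which sit in the same degree --- is never actually carried out; you only state that you ``expect'' to verify non-proportionality in the $f_T$ basis. The paper closes this gap concretely: it realizes the two routes as compositions of the explicit morphisms built from Propositions \ref{sing1}, \ref{sing2} and \ref{sing3}, multiplies the corresponding singular polynomials, and compares the leading terms of the two compositions, which involve visibly different monomials (and opposite signs) on the $v_{T_1}$ and $v_{T_2}$ components, hence are not proportional. Until you either justify the intertwiner calculus at these integral parameter values (essentially redoing the paper's pole analysis) and then perform an analogous explicit comparison, your argument establishes neither that the two candidate vectors are nonzero nor that they are independent, so it does not yet prove $\mathrm{Dim}(\mathrm{Hom}(\Delta(\lambda_{i,k}),\Delta(\lambda_{i,j})))\geq 2$.
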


As we say before in order to prove our results we combine the necessary conditions from \cite{GGAD}  with explicit computations involving Dunkl operators acting on vector-valued polynomial functions. A standard teqnique for answering the questions we pose here is to aply the KZ-functor and use known results about Hecke algebras. The obstruction in our case is that we do not have good control over the KZ images of the standard modules (except for parameteres in a certain cone). 

One might hope that our results would compute the simple modules in category $\mathcal{O}$. However, it is quite rare that the radical of the standard module is generated by the singular polinomials it contains. For instance if the radical of every standard module is generated by singular polynomials then every simple object in category O has a BGG resolution by standard modules (Theorem 1.1 of \cite{GN} ).

 Category $\mathcal{O}$ is a highest weight category with $BGG$ reciprocity so by Lemma 4.5 of \cite{GGAD} it is equipped with a canonical coarsest order. In the example at the end of the paper we observe that this poset is graded and self-dual. This raises the question if this is always so and if there  is a structural reason for this phenomenon.
\section{Notation and preliminaries}
An $r$-$partition$ of $n$ is a sequence $\lambda=(\lambda^0,...,\lambda^{r-1})$ of partitions such that the sum of all the boxes of all the partitions is $n$.
A standard Young tableau $T$ on an $r$-partition $\lambda$ of $n$ is a filling of the boxes of the partitions $\lambda^0,...,\lambda^{r-1}$ with the integer $1,...,n$ in such a way that the entries within each partition $\lambda^i$ are increasing in the rows and the columns. For example for $n=2$ we have three kind of $r$-$partitions$ of 2. They are:\begin{enumerate}
   \item $\lambda_i=\left(\emptyset,...,\yng(2),...,\emptyset\right).$
   \item $\lambda^i=\left(\Yvcentermath1 \emptyset,...,\yng(1,1),...,\emptyset\right).$
   \item $\lambda_{i,j}=\left(\emptyset,...,\yng(1),...,\yng(1),...,\emptyset\right).$
 \end{enumerate}
 Where $0\leq i,j\leq {r-1}$ and the boxes are in position $i$ and $j$.
For partitions $\lambda_i$ and $\lambda^i$ there is one standard Young tableau associated and for $\lambda_{i,j}$ there are two:
\begin{multicols}{2}
\begin{itemize}
\item $\left(\emptyset,...,\young(1),...,\young(2),...,\emptyset\right)$
\item $\left(\emptyset,...,\young(2),...,\young(1),...,\emptyset\right)$
\end{itemize} 
\end{multicols}

Let be $W=G(r,1,2)$ the group of $2\times 2$ monomial matrices where each entry is a $r$-root of unity. The irreducible representations of $W$ may be parametrized by $r$-partitions of 2 in such a way that if $S^{\lambda}$ is the irreducible module corresponding to $\lambda$, then $S^{\lambda}$ has a basis $v_{T}$ indexed by $SYT(\lambda)$ (see Theorem 3.1 of \cite{SG1}).
\subsection{The rational Cherednik algebra for $G(r,1,2)$}
Let be $y_1=(1,0)$, $y_2=(0,1)$, $x_1=(1,0)^t$ and $x_2=(0,1)^t$, so that $y_1,y_2$ is the standard basis of  $\mathfrak{h}=\CC^2$ and $x_1,x_2$ is the dual basis of $\mathfrak{h}^\ast$.

Let $c_0,d_0,d_1,...,d_{r-1}\in \CC$. We define $d_i$ for all $i\in \ZZ$ with the equations:
\begin{align*}
\begin{array}{ccccc}
  d_0+d_1+...+d_{r-1}=0 & \mathrm{and} & d_i=d_j & \mathrm{if} & i=j \mathrm{mod} r
\end{array}
\end{align*}
The rational Cherednik algebra for $W=G(r,1,2)$ with parameters $c_0,d_1,...,d_{r-1}$ is the algebra generated by $\CC [x_1,x_2]$, $\CC [y_1,y_2]$ and $\overline{w}$ for $w\in W$ with the relations
$$\begin{array}{cccc}
   \bar{w}\bar{v}=\overline{wv} & \bar{w}x=(wx)\bar{w} & $ and $  & \bar{w}y=(wy)\bar{w}
 \end{array}$$
 for $w,v \in W$, $x \in \CC [x_1,x_2]$, and $y \in \CC [y_1,y_2]$,
\begin{align*}
y_ix_j=x_jy_i+c_0\displaystyle\sum_{l=0}^{r-1}\zeta^{-l}\overline{\zeta_i^ls_{ij}\zeta^{-j}_i}
\end{align*}
for $1\leq i \neq j\leq 2$, and
\begin{align*}
y_ix_i=x_iy_i+\kappa-\displaystyle \sum_{l=1}^{r-1}(d_j-d_{j-1})e_{ij}-c_0\displaystyle \sum_{j\neq i}\sum_{l=0}^{r-1}\overline{\zeta_i^ls_{ij}\zeta^{-j}_i}
\end{align*}
for $1\leq i\leq 2$, where $e_{ij}\in \CC W$ is the idempotent
\begin{align*}
e_{ij}=\displaystyle\frac{1}{r}\sum_{l=0}^{r-1}\zeta^{-lj}\overline{\zeta_i^l}.
\end{align*}

The PBW theorem (see for example \cite{SG4}) for $\HH$ assert that as $\CC$-vector spaces,
$$\HH\simeq \CC[x_1,x_2]\otimes_{\CC} \CC W \otimes_{\CC} \CC[y_1,y_2]$$
The following proposition is a particular case of Proposition 4.1 of \cite{SG1} when $n=2$.

 \begin{proposition} \label{propact}
The relations between $y_1$ and $y_2$ with an element of the form $x_1^nx_2^m$ are given by:

\begin{center}
$\begin{array}{ccl}
  y_1x_1^nx_2^m & = & x_1^nx_2^my_1+x_1^{n-1}x_2^m\left(n-\displaystyle\sum_{j=0}^{r-1}\frac{d_j}{r}\sum_{l=0}^{r-1}\zeta^{-lj}(1-\zeta^{-ln})\overline{\left(
                                                                                                                                               \begin{array}{cc}
                                                                                                                                                 \zeta^l & 0 \\
                                                                                                                                                 0 & 1 \\
                                                                                                                                               \end{array}
                                                                                                                                             \right)
 }\right) \\
  \hfill & \hfill & -c_0 \displaystyle\sum_{l=0}^{r-1}\frac{x_1^nx_2^m-\left(
                                                                                              \begin{array}{cc}
                                                                                                0 & \zeta^{l} \\
                                                                                                \zeta^{-l} & 0 \\
                                                                                              \end{array}
                                                                                            \right)\cdot x_1^nx_2^m
 }{x_1-\zeta^lx_2}\overline{\left(
                    \begin{array}{cc}
                      0 & \zeta^{l} \\
                      \zeta^{-l} & 0 \\
                    \end{array}
                  \right)}
\end{array}.$
\end{center}

\begin{center}
$\begin{array}{ccl}
         y_2x_1^nx_2^m & = & x_1^nx_2^my_2+x_1^{n}x_2^{m-1}\left(m-\displaystyle\sum_{j=0}^{r-1}\frac{d_j}{r}\sum_{l=0}^{r-1}\zeta^{-lj}(1-\zeta^{-lm})\overline{\left(
                                                                                                                                               \begin{array}{cc}
                                                                                                                                                 1 & 0 \\
                                                                                                                                                 0 & \zeta^l \\
                                                                                                                                               \end{array}
                                                                                                                                             \right)
 }\right) \\
         \hfill & \hfill & -c_0 \displaystyle\sum_{l=0}^{r-1}\frac{x_1^nx_2^m-\left(
                                                                                              \begin{array}{cc}
                                                                                                0 & \zeta^{-l} \\
                                                                                                \zeta^{l} & 0 \\
                                                                                              \end{array}
                                                                                            \right)\cdot x_1^nx_2^m
 }{x_2-\zeta^{l}x_1}\overline{\left(
                    \begin{array}{cc}
                      0 & \zeta^{-l} \\
                      \zeta^{l} & 0 \\
                    \end{array}
                  \right)}
       \end{array}.
$
\end{center}

\end{proposition}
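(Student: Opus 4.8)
The plan is to prove both identities together by induction on the total degree $n+m$, taking as input only the two defining commutation relations $y_ix_i=x_iy_i+(\cdots)$ and $y_ix_j=x_jy_i+(\cdots)$ recorded above. I will carry out $y_1\cdot x_1^nx_2^m$ in detail; the case of $y_2$ is obtained by interchanging the two coordinates together with the corresponding $\zeta$-twists. The base case $n=m=0$ is immediate: the bracketed coefficient reduces to $0-\sum_j\frac{d_j}{r}\sum_l\zeta^{-lj}(1-\zeta^{0})(\cdots)=0$ since $1-\zeta^{0}=0$, and each numerator $x_1^0x_2^0-\sigma\cdot x_1^0x_2^0$ vanishes because $W$ fixes constants, so the asserted formula collapses to $y_1\cdot 1=1\cdot y_1$.

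For the inductive step assume $n\ge1$ and write $x_1^nx_2^m=x_1\cdot(x_1^{n-1}x_2^m)$. Applying the defining relation for $y_1x_1$ yields
\begin{equation*}
y_1x_1^nx_2^m=x_1\bigl(y_1\,x_1^{n-1}x_2^m\bigr)+[y_1,x_1]\,x_1^{n-1}x_2^m,
\end{equation*}
where $[y_1,x_1]\in\CC W$ is assembled from the scalar term $\kappa$, the diagonal idempotents $e_{1j}$, and the twisted transpositions. To the first summand I apply the inductive hypothesis and multiply on the left by $x_1$; in the second summand I push each group element past the monomial using $\overline{w}\,x=(wx)\overline{w}$, so that $\overline{w}\cdot x_1^{n-1}x_2^m=(w\cdot x_1^{n-1}x_2^m)\,\overline{w}$ and the monomial is transformed by $w$ before $\overline{w}$ is emitted on the right. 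Collecting everything, the output splits into three families: a scalar (pairing) part, whose contributions accumulate to the integer $n$; a diagonal part indexed by the cyclic subgroup, giving the $d_j$-sum with factor $1-\zeta^{-ln}$; and an off-diagonal part indexed by the twisted transpositions, giving the $c_0$-sum. The case $n=0$ (peel an $x_2$, using the off-diagonal relation $y_1x_2=x_2y_1+\cdots$) is analogous and simpler, the diagonal factor $1-\zeta^{-ln}$ then being identically zero.

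I expect the crux to be showing that the diagonal and off-diagonal families reassemble, after the induction, into the single divided-difference operators displayed in the statement. Writing $\sigma$ for the twisted swap with matrix $\left(\begin{smallmatrix}0&\zeta^l\\\zeta^{-l}&0\end{smallmatrix}\right)$, one has $\sigma(x_1)=\zeta^lx_2$, so that $x_1-\zeta^lx_2$ is a $(-1)$-eigenvector and the quotients remain polynomial; the whole reassembly then rests on the degree-lowering identity
\begin{equation*}
\frac{x_1f-\sigma(x_1f)}{x_1-\zeta^lx_2}=x_1\,\frac{f-\sigma f}{x_1-\zeta^lx_2}+\sigma(f),
\end{equation*}
which matches the new divided difference produced by peeling off one $x_1$ against the one carried by the inductive hypothesis. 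The diagonal terms obey the identical recursion with $x_1-\zeta^lx_2$ replaced by $x_1$ and $\sigma$ by the diagonal reflection, so a single computation settles both. The genuine bookkeeping difficulty is keeping the noncommutativity straight: the transformed monomial sits to the left of the emitted $\overline{w}$, and the $\zeta$-twist attached to each reflection must be tracked through every step.

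Finally, there is a shortcut worth recording: these formulas are exactly the $n=2$ specialization of Proposition~4.1 of \cite{SG1}. One may therefore instead substitute $n=2$ into Griffeth's general $G(r,1,n)$ relations and translate into the two-variable notation fixed above, checking only that the conventions for $\zeta$, $s_{ij}$, $e_{ij}$, and the $d_j$ agree.
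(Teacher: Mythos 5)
The paper offers no proof of this proposition at all: it is stated as ``a particular case of Proposition 4.1 of \cite{SG1} when $n=2$,'' which is exactly the shortcut you record in your final paragraph. So your closing remark coincides with the paper's actual route, while your main argument --- induction on $n+m$ from the defining commutation relations --- is a genuinely different, self-contained alternative. The inductive route is sound in outline: your key identity
$\frac{x_1f-\sigma(x_1f)}{x_1-\zeta^lx_2}=x_1\frac{f-\sigma f}{x_1-\zeta^lx_2}+\sigma(f)$
is correct (it is the telescoping $x_1f-\sigma(x_1)\sigma(f)=x_1(f-\sigma f)+(x_1-\sigma(x_1))\sigma(f)$ divided through), and the same telescoping handles the diagonal family since $\zeta_1^l(x_1)$ is a scalar multiple of $x_1$, so a single computation does govern both families as you claim. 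What the citation buys is brevity and the avoidance of two bookkeeping hazards that your sketch leaves at the level of a plan rather than a verification: (i) the paper's printed commutation relations contain an undefined constant $\kappa$ and a mismatched summation index ($\sum_{l}$ with summand in $j$), so to run your induction you must first fix the relations (e.g., by importing them correctly from \cite{SG1}), and in particular check that the scalar contributions really accumulate to the integer $n$; (ii) the reassembly of the three families into the displayed divided-difference form is asserted (``Collecting everything, the output splits\ldots'') rather than carried out. Neither point is a fatal gap --- the structure is right and the crux identity is proved --- but if you intend the induction as the actual proof rather than the citation, those two steps need to be written out.
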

\subsection{Standard modules for the rational Cherednik algerba}
Recall that the irreducible $\CC W$-modules $S^{\lambda}$ are parametrized by $r$-partition $\lambda$ of $2$. Define the standard module $\Delta (\lambda)$ to be the induced module
\begin{align*}
\Delta(\lambda)=\mathrm{Ind}^{\HH}_{\CC W\otimes \CC[y_1,y_2]}S^{\lambda}
\end{align*}
and we define the $\CC[y_1,y_2]$ action on $S^{\lambda}$ by
\begin{align*}
\begin{array}{ccccc}
  y_i\cdot v=0 & \text{ for } & 1\leq i\leq 2 & \text{ and } & v\in S^{\lambda}.
\end{array}
\end{align*}

By the PBW theorem for $\HH$ we have an isomorphism of $\CC$-vector space
\begin{align*}
\Delta(\lambda)\simeq \CC[x_1,x_2]\otimes_{\CC}S^{\lambda}.
\end{align*}

We want to describe the action of $\HH$ on the standard modules. As we say in the preliminars we have three kinds of $r$-partitions of 2: $\lambda_i$, $\lambda^i$ and $\lambda_{i,j}$.
The irreducible representations $S^{\lambda_i}$ and $S^{\lambda^i}$ are one dimensional with basis $v_{T}$. The irreducible representation $S^{\lambda_{i,j}}$ is two dimensional with basis $v_{T_1}$ and $v_{T_2}$.
The action of $W$ on the irreducible representations $S^{\lambda}$ is described in the following table

\begin{center}
\begin{tabular}{|l|l|}
  \hline
  \multicolumn{1}{|c|}{$\lambda_i$} & \multicolumn{1}{|c|}{$\lambda^i$} \\ \hline
  $\zeta_2\cdot v_T=\zeta^{i}v_T$ & $\zeta_2\cdot v_T=\zeta^{i}v_T$ \\
  $\zeta_1\cdot v_T=\zeta^{i}v_T$ & $\zeta_1\cdot v_T=\zeta^{i}v_T$ \\
  $I_2^t\cdot v_T=v_T$ & $I_2^t\cdot v_T=-v_T$ \\ \hline
  \multicolumn{2}{|c|}{$\lambda_{i,j}$}  \\ \hline
  \multicolumn{2}{|l|}{$\begin{array}{cc}
                                                                   \zeta_2\cdot v_{T_1}=\zeta^{j}v_{T_1} &  \hspace{9 mm}\zeta_2\cdot v_{T_2}=\zeta^{i}v_{T_2}
                                                                 \end{array}$} \\
  \multicolumn{2}{|l|}{$\begin{array}{cc}
                                                                    \zeta_1\cdot v_{T_1}=\zeta^{i}v_{T_1} & \hspace{9 mm}\zeta_1\cdot v_{T_2}=\zeta^{j}v_{T_2}
                                                                  \end{array}
                                  $} \\
  \multicolumn{2}{|l|}{$\begin{array}{cc}
                                                              I_2^t\cdot v_{T_1}=v_{T_2} & \hspace{13 mm}I_2^t\cdot v_{T_2}=v_{T_1}
                                                            \end{array}
                                  $} \\
  \hline
\end{tabular}
\end{center}
where $I_2$ is the identity matrix and $\zeta_i$ is the diagonal matrix that has $\zeta$ in the $i$-th position on the diagonal, and zero elsewhere.

\subsection{The action on $\Delta(\lambda)$}

The elements of $\Delta(\lambda)$ are sums of elements of the form $x_1^nx_2^m\otimes v_T$. Our interest is focus on how $\mathds{H}$ acts in elements of this form. The elements of $\CC[x_1,x_2]$ act by multiplication and the group elements act in the obvious way. Our interest is focus in how $y_1$ and $y_2$ act on an element $x_1^nx_2^m\otimes v_T$. 
In the following propositions the brackets over the sum ($[\ast]$) mean the entire part.

\begin{proposition} \label{actn1}
In $\lambda=\lambda_i$ the action of $y_1$ and $y_2$ in a generic $x_1^nx_2^m\otimes v_T$ is given by:
\begin{enumerate}[1)]
\item
$y_1\cdot x_1^nx_2^m\otimes v_T=$
$$\left\{\begin{array}{ccc}
                                   \left((n-d_i+d_{i-n}-c_0r)x_1^{n-1}x_2^m-\displaystyle c_0r\sum_{k=1}^{\left[\frac{n-m-1}{r}\right]}x_1^{n-kr-1}x_2^{m+kr}\right)\otimes v_T &  if  &n>m \\
                                   \left((n-d_i+d_{i-n})x_1^{n-1}x_2^m+\displaystyle c_0r\sum_{k=1}^{\left[\frac{m-n}{r}\right]}x_1^{n+kr-1}x_2^{m-kr}\right)\otimes v_T & if & n\leq m
                                                                   \end{array}
\right.$$
\item
$y_2\cdot x_1^nx_2^m\otimes v_T=$
$$\left\{\begin{array}{ccc}
                                   \left((m-d_i+d_{i-m})x_1^{n}x_2^{m-1}+\displaystyle c_0r\sum_{k=1}^{\left[\frac{n-m}{r}\right]}x_1^{n-kr}x_2^{m+kr-1}\right)\otimes v_T &  if  &n\geq m \\
                                   \left((m-d_i+d_{i-m}-c_0r)x_1^{n}x_2^{m-1}-\displaystyle c_0r\sum_{k=1}^{\left[\frac{m-n-1}{r}\right]}x_1^{n+kr}x_2^{m-kr-1}\right)\otimes v_T & if & n<m
                                                                    \end{array}
\right.$$
\end{enumerate}

\end{proposition}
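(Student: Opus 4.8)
The plan is to specialize the general commutation relations of Proposition \ref{propact} to the one-dimensional module $S^{\lambda_i}$ and then evaluate every group element occurring there on the basis vector $v_T$. In the expression for $y_1\cdot(x_1^nx_2^m\otimes v_T)$ the leading term $x_1^nx_2^my_1$ annihilates $v_T$ since $y_1\cdot v_T=0$, so only two pieces survive: a diagonal (Euler-type) term involving the matrices $\zeta_1^l=\overline{\left(\begin{smallmatrix}\zeta^l&0\\0&1\end{smallmatrix}\right)}$, and a reflection term involving $s_l=\overline{\left(\begin{smallmatrix}0&\zeta^l\\\zeta^{-l}&0\end{smallmatrix}\right)}$. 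First I would record the two facts that make the one-dimensional case clean: from $\zeta_1\cdot v_T=\zeta^iv_T$ we get $\zeta_1^l\cdot v_T=\zeta^{il}v_T$, and since $s_l=\zeta_1^l\zeta_2^{-l}I_2^t$ the phases cancel, so $s_l\cdot v_T=v_T$.

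For the diagonal term I would substitute $\zeta_1^l\cdot v_T=\zeta^{il}v_T$ and interchange the two sums, turning the coefficient into $\sum_{j}\frac{d_j}{r}\sum_l\bigl(\zeta^{l(i-j)}-\zeta^{l(i-j-n)}\bigr)$. The orthogonality relation $\sum_{l=0}^{r-1}\zeta^{lk}=r$ if $r\mid k$ and $0$ otherwise collapses the inner sum, leaving only the residues $j\equiv i$ and $j\equiv i-n\pmod r$; using the periodic convention $d_a=d_b$ whenever $a\equiv b\pmod r$, the diagonal coefficient becomes exactly $n-(d_i-d_{i-n})=n-d_i+d_{i-n}$, the coefficient of $x_1^{n-1}x_2^m$ in the statement.

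The reflection term is where the real work lies. I would first compute $s_l\cdot(x_1^nx_2^m)=\zeta^{l(n-m)}x_1^mx_2^n$, the matrix acting on $\mathfrak{h}^\ast$ by its inverse transpose, which is precisely what makes $x_1-\zeta^lx_2$ divide the numerator; each summand is then $-c_0\,\frac{x_1^nx_2^m-\zeta^{l(n-m)}x_1^mx_2^n}{x_1-\zeta^lx_2}\otimes v_T$. When $n\ge m$ I would factor out $x_1^mx_2^m$ and apply the finite geometric identity $\frac{a^p-b^p}{a-b}=\sum_{k=0}^{p-1}a^{p-1-k}b^k$ with $a=x_1$, $b=\zeta^lx_2$, $p=n-m$, producing $\sum_{k=0}^{n-m-1}\zeta^{lk}x_1^{n-1-k}x_2^{m+k}$; summing over $l$ and applying orthogonality again keeps only $k\equiv0\pmod r$, each surviving term weighted by $r$. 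The $k=0$ term is $x_1^{n-1}x_2^m$, which merges with the diagonal term and upgrades its coefficient to $n-d_i+d_{i-n}-c_0r$, while the terms $k=r,2r,\dots$ up to $k=\left[\frac{n-m-1}{r}\right]$ give exactly the displayed sum $-c_0r\sum_{k\ge1}x_1^{n-kr-1}x_2^{m+kr}$; this is the $n>m$ branch. When $n\le m$ I would instead factor $x_1^nx_2^n$, which flips the overall sign and shifts the surviving exponents so that no $k=0$ term is produced: the diagonal coefficient stays $n-d_i+d_{i-n}$ and the reflection sum contributes $+c_0r\sum_{k\ge1}x_1^{n+kr-1}x_2^{m-kr}$ up to $k=\left[\frac{m-n}{r}\right]$, which is the $n\le m$ branch.

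The formula for $y_2$ follows by the same argument after exchanging the roles of $x_1,x_2$ and of $n,m$ and replacing $\zeta_1^l$ by $\zeta_2^l$ (which still acts on $v_T$ by $\zeta^{il}$), yielding the symmetric statement with $d_{i-m}$ and the split at $n\ge m$ versus $n<m$. The hard part will be the bookkeeping after imposing $r\mid k$: one must check that the $k=0$ contribution occurs precisely in the strict-inequality branch (so the $-c_0r$ shift of the Euler coefficient lands there and not in the other branch), that the upper limits $\left[\frac{n-m-1}{r}\right]$ and $\left[\frac{m-n}{r}\right]$ are the correct floors coming from $kr\le n-m-1$ and $kr\le m-n$, and that the sign from factoring $x_1^nx_2^n$ rather than $x_1^mx_2^m$ is tracked consistently through the boundary case $n=m$. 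The one conceptual point to settle before any of the geometric-sum manipulation is legitimate is verifying that $x_1-\zeta^lx_2$ genuinely divides each numerator, i.e.\ that $s_l$ acts on the monomial through the correct inverse-transpose action on $\mathfrak{h}^\ast$.
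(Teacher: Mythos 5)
Your proposal is correct and follows essentially the same route as the paper's proof: specialize Proposition \ref{propact}, evaluate the group elements on $v_T$ (getting $\zeta^{il}$ for the diagonal part and $1$ for the reflections), collapse the $d_j$-sums by root-of-unity orthogonality to obtain $n-d_i+d_{i-n}$, then factor out $x_1^{\min(n,m)}x_2^{\min(n,m)}$, expand the geometric quotient, and apply orthogonality again so that only exponents divisible by $r$ survive, with the $k=0$ term merging into the Euler coefficient exactly in the strict-inequality branch. The bookkeeping you flag (the sign from factoring $x_1^nx_2^n$, the shift to $\zeta^{-l(k+1)}$ that kills the $k=0$ term in the $n\le m$ case, and the floors $\left[\frac{n-m-1}{r}\right]$ versus $\left[\frac{m-n}{r}\right]$) is precisely how the paper's argument concludes.
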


\begin{proposition} \label{actn2}
In $\lambda=\lambda^i$ the action of $y_1$ and $y_2$ in a generic $x_1^nx_2^m\otimes v_T$ is given by:
\begin{enumerate}
\item
$y_1\cdot x_1^nx_2^m\otimes v_T=$
$$\left\{\begin{array}{ccc}
                                   \left((n-d_i+d_{i-n}+c_0r)x_1^{n-1}x_2^m+\displaystyle c_0r\sum_{k=1}^{\left[\frac{n-m-1}{r}\right]}x_1^{n-kr-1}x_2^{m+kr}\right)\otimes v_T &  if  &n>m \\
                                   \left((n-d_i+d_{i-n})x_1^{n-1}x_2^m-\displaystyle c_0r\sum_{k=1}^{\left[\frac{m-n}{r}\right]}x_1^{n+kr-1}x_2^{m-kr}\right)\otimes v_T & if & n\leq m
                                 \end{array}
\right.$$
\item
$y_2\cdot x_1^nx_2^m\otimes v_T=$
$$\left\{\begin{array}{ccc}
                                   \left((m-d_i+d_{i-m})x_1^{n}x_2^{m-1}-\displaystyle c_0r\sum_{k=1}^{\left[\frac{n-m}{r}\right]}x_1^{n-kr}x_2^{m+kr-1}\right)\otimes v_T &  if  &n\geq m \\
                                   \left((m-d_i+d_{i-m}+c_0r)x_1^{n}x_2^{m-1}+\displaystyle c_0r\sum_{k=1}^{\left[\frac{m-n-1}{r}\right]}x_1^{n+kr}x_2^{m-kr-1}\right)\otimes v_T & if & n<m
                                    \end{array}
\right.$$
\end{enumerate}

\end{proposition}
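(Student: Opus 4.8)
The plan is to reuse the computation proving Proposition~\ref{actn1} almost verbatim, exploiting that $S^{\lambda^i}$ differs from $S^{\lambda_i}$ only in the action of the transposition $I_2^t$. Starting from the two relations of Proposition~\ref{propact}, I first evaluate the group elements appearing there on $v_T$. The diagonal matrices $\ov{\left(\begin{smallmatrix}\zeta^l & 0\\ 0 & 1\end{smallmatrix}\right)}=\zeta_1^l$ (in the $y_1$ relation) and $\ov{\left(\begin{smallmatrix}1 & 0\\ 0 & \zeta^l\end{smallmatrix}\right)}=\zeta_2^l$ (in the $y_2$ relation) act on $v_T$ by the scalar $\zeta^{il}$, exactly as in $S^{\lambda_i}$, since $\zeta_1\cdot v_T=\zeta_2\cdot v_T=\zeta^i v_T$ in both representations. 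Summing over $l$ with $\sum_{l=0}^{r-1}\zeta^{kl}=r$ when $r\mid k$ and $0$ otherwise then collapses the diagonal contribution to the coefficient $n-d_i+d_{i-n}$ (respectively $m-d_i+d_{i-m}$), identical to the $\lambda_i$ case.

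The sole difference comes from the reflection matrices. Factoring them as $\ov{\left(\begin{smallmatrix}0 & \zeta^l\\ \zeta^{-l} & 0\end{smallmatrix}\right)}=\ov{I_2^t}\,\zeta_1^{-l}\zeta_2^{l}$ and $\ov{\left(\begin{smallmatrix}0 & \zeta^{-l}\\ \zeta^{l} & 0\end{smallmatrix}\right)}=\ov{I_2^t}\,\zeta_1^{l}\zeta_2^{-l}$, the diagonal factors act on $v_T$ by $\zeta^{-il}\zeta^{il}=1$, so each reflection matrix acts simply by the scalar by which $I_2^t$ acts. In $S^{\lambda_i}$ this scalar is $+1$, whereas in $S^{\lambda^i}$ we have $I_2^t\cdot v_T=-v_T$. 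Hence every reflection matrix acts by $-1$, so that the entire $c_0$-summand in each relation of Proposition~\ref{propact} is multiplied by $-1$, while the diagonal summand is untouched.

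From here the remaining manipulation is identical to that in Proposition~\ref{actn1}: factor $x_1^a x_2^a$ (with $a=\min(n,m)$) out of the numerator, expand $\tfrac{x_1^p-(\zeta^l x_2)^p}{x_1-\zeta^l x_2}=\sum_{k=0}^{p-1} x_1^{p-1-k}\zeta^{lk}x_2^{k}$ (and the analogous identity in $x_2-\zeta^l x_1$ for $y_2$), and sum over $l$ to retain only the powers with $r\mid k$, treating the cases $n>m$ and $n\le m$ for $y_1$, and $n\ge m$ and $n<m$ for $y_2$, separately. Because only the sign of the reflection eigenvalue has changed, the net effect is to reverse the sign of every $c_0 r$ contribution — both the scalar $\pm c_0 r$ absorbed into the leading coefficient and the tail sum $\pm c_0 r\sum_k$ — producing precisely the formulas asserted. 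No genuine difficulty arises beyond this bookkeeping; the one point to verify with care is that the sign change is confined to the $c_0$-part and leaves the coefficient $n-d_i+d_{i-n}$ (respectively $m-d_i+d_{i-m}$) unchanged, which is clear since that coefficient originates entirely from the diagonal summand.
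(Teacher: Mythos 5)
Your proposal is correct and follows essentially the same route the paper takes: apply Proposition \ref{propact}, evaluate the group elements on $v_T$ (where the only change from $S^{\lambda_i}$ is that each reflection matrix $\ov{I_2^t}\,\zeta_1^{\mp l}\zeta_2^{\pm l}$ now acts by $-1$ instead of $+1$), and then repeat the geometric-series expansion of the $\lambda_i$ computation, which flips the sign of every $c_0r$ term. This is exactly the ``replace $c_0$ by $-c_0$'' reduction the paper itself invokes for the analogous Proposition \ref{sing2}.
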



In $\lambda=\lambda_{i,j}$ we have two generators of $S^{\lambda}$, called $v_{T_1}$ and $v_{T_2}$.
\begin{proposition} \label{actn3}
When $\lambda=\lambda_{i,j}$ the action of $y_1$ and $y_2$ in a generic $x_1^nx_2^m\otimes v_{T_1}$ or a generic $x_1^nx_2^m\otimes v_{T_2}$ is given by:
\begin{enumerate}
\item
$y_1\cdot x_1^nx_2^m\otimes v_{T_1}=$

$$\left\{\begin{array}{ccc}
                                   (n-d_i+d_{i-n})x_1^{n-1}x_2^m\otimes v_{T_1}-rc_0\displaystyle \sum_{k=1}^{\left[ \frac{n-m-1+j-i}{r} \right]}x_1^{n-kr+j-i-1}x_2^{m+rk-j+i}\otimes v_{T_2} &  if  &n>m \\
                                   (n-d_i+d_{i-n})x_1^{n-1}x_2^m\otimes v_{T_1}+rc_0\displaystyle \sum_{k=0}^{\left[ \frac{m-n-j+i}{r} \right]}x_1^{n+kr+j-i-1}x_2^{m-rk-j+i}\otimes v_{T_2} & if & n<m \\
                                   (n-d_i+d_{i-n})x_1^{n-1}x_2^n\otimes v_{T_1} & if & n=m
                                 \end{array}
\right.$$

\item
$y_1\cdot x_1^nx_2^m\otimes v_{T_2}=$
$$\left\{\begin{array}{ccc}
                                   (n-d_j+d_{j-n})x_1^{n-1}x_2^m\otimes v_{T_2}-rc_0\displaystyle \sum_{k=0}^{\left[ \frac{n-m-1-j+i}{r} \right]}x_1^{n-kr-j+i-1}x_2^{m+rk+j-i}\otimes v_{T_1} &  if  &n>m \\
                                   (n-d_j+d_{j-n})x_1^{n-1}x_2^m\otimes v_{T_2}+rc_0\displaystyle \sum_{k=1}^{\left[ \frac{m-n+j-i}{r} \right]}x_1^{n+kr-j+i-1}x_2^{m-rk+j-i}\otimes v_{T_1} & if & n<m \\
                                   (n-d_j+d_{j-n})x_1^{n-1}x_2^n\otimes v_{T_2} & if & n=m
                                 \end{array}
\right.$$
\item
$y_2\cdot x_1^nx_2^m\otimes v_{T_1}=$
$$\left\{\begin{array}{ccc}
                                   (m-d_j+d_{j-m})x_1^{n}x_2^{m-1}\otimes v_{T_1}+rc_0\displaystyle \sum_{k=1}^{\left[ \frac{n-m-i+j}{r} \right]}x_1^{n-kr-i+j}x_2^{m+rk+i-j-1}\otimes v_{T_2} &  if  &n>m \\
                                   (m-d_j+d_{j-m})x_1^{n}x_2^{m-1}\otimes v_{T_1}-rc_0\displaystyle \sum_{k=0}^{\left[ \frac{m-n+i-j-1}{r} \right]}x_1^{n+kr-i+j}x_2^{m-rk+i-j-1}\otimes v_{T_2} & if & n<m \\
                                   (n-d_j+d_{j-n})x_1^{n-1}x_2^n\otimes v_{T_1} & if & n=m
                                 \end{array}
\right.$$
\item
$y_2\cdot x_1^nx_2^m\otimes v_{T_2}=$
$$\left\{\begin{array}{ccc}
                                   (m-d_i+d_{i-m})x_1^{n}x_2^{m-1}\otimes v_{T_2}+rc_0\displaystyle \sum_{k=0}^{\left[ \frac{n-m+i-j}{r} \right]}x_1^{n-kr+i-j}x_2^{m+rk-i+j-1}\otimes v_{T_1}  & if &n>m \\
                                   (m-d_i+d_{i-m})x_1^{n}x_2^{m-1}\otimes v_{T_2}-rc_0\displaystyle \sum_{k=1}^{\left[ \frac{m-n+j-i-1}{r} \right]}x_1^{n+kr+i-j}x_2^{m-rk-i+j-1}\otimes v_{T_1} & if & n<m \\
                                   (n-d_i+d_{i-n})x_1^{n-1}x_2^n\otimes v_{T_2} & if & n=m
                                 \end{array}
\right.$$
\end{enumerate}

\end{proposition}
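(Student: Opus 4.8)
The plan is to compute $y_a\cdot(x_1^n x_2^m\otimes v_T)$ directly from the commutation relations of Proposition \ref{propact}, by the same computation that establishes Propositions \ref{actn1} and \ref{actn2}. Since $y_a\cdot(x_1^n x_2^m\otimes v_T)=(y_a x_1^n x_2^m)\otimes v_T$ and $y_a$ annihilates $S^{\lambda}$, the term $x_1^n x_2^m y_a\otimes v_T$ drops out, and we are left with the two commutator contributions in Proposition \ref{propact}: a diagonal piece carrying the group element $\overline{\left(\begin{smallmatrix}\zeta^l & 0\\ 0 & 1\end{smallmatrix}\right)}$ (resp.\ $\overline{\left(\begin{smallmatrix}1 & 0\\ 0 & \zeta^l\end{smallmatrix}\right)}$ for $y_2$) and an off-diagonal piece carrying the reflection $\overline{\left(\begin{smallmatrix}0 & \zeta^l\\ \zeta^{-l} & 0\end{smallmatrix}\right)}$ (resp.\ its transpose for $y_2$).

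First I would evaluate these group elements on the basis $\{v_{T_1},v_{T_2}\}$ using the action table for $\lambda_{i,j}$. The diagonal element $\overline{\left(\begin{smallmatrix}\zeta^l & 0\\ 0 & 1\end{smallmatrix}\right)}$ acts on $v_{T_1}$ by $\zeta^{il}$ and on $v_{T_2}$ by $\zeta^{jl}$, so applying the orthogonality relation $\sum_{l=0}^{r-1}\zeta^{kl}=r\,\delta_{r\mid k}$ to the diagonal sum collapses the scalar leading coefficient to $n-d_i+d_{i-n}$ on $v_{T_1}$ and $n-d_j+d_{j-n}$ on $v_{T_2}$ for $y_1$; for $y_2$ one gets the analogous expressions in $m$ with $i,j$ interchanged, since $\overline{\left(\begin{smallmatrix}1 & 0\\ 0 & \zeta^l\end{smallmatrix}\right)}$ acts on $v_{T_1}$ by $\zeta^{jl}$. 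This reproduces the diagonal terms of the statement. The reflection satisfies $\overline{\left(\begin{smallmatrix}0 & \zeta^l\\ \zeta^{-l} & 0\end{smallmatrix}\right)}v_{T_1}=\zeta^{l(j-i)}v_{T_2}$ and $\overline{\left(\begin{smallmatrix}0 & \zeta^l\\ \zeta^{-l} & 0\end{smallmatrix}\right)}v_{T_2}=\zeta^{l(i-j)}v_{T_1}$, so the off-diagonal piece always interchanges $v_{T_1}$ and $v_{T_2}$ while inserting a phase $\zeta^{\pm l(j-i)}$; this is the origin of the $v_{T_2}$ (resp.\ $v_{T_1}$) summands in the four formulas.

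Next I would simplify the rational off-diagonal sum. Using $\left(\begin{smallmatrix}0 & \zeta^l\\ \zeta^{-l} & 0\end{smallmatrix}\right)\cdot x_1^n x_2^m=\zeta^{l(n-m)}x_1^m x_2^n$, factoring out the common power $x_1^{\min(n,m)}x_2^{\min(n,m)}$ and applying the geometric identity $\frac{a^N-b^N}{a-b}=\sum_{k=0}^{N-1}a^{N-1-k}b^k$ exactly as in the $\lambda_i$ computation, the off-diagonal contribution becomes a sum $\sum_k \zeta^{lk}(\cdots)$ over monomials. Carrying the extra phase $\zeta^{\pm l(j-i)}$ through and summing over $l$ by orthogonality now selects the indices $k$ in the residue class $k\equiv \mp(j-i)\pmod r$ rather than $k\equiv 0$; reindexing the selected $k$ produces the shifted exponents such as $x_1^{n-kr+j-i-1}x_2^{m+rk-j+i}$ and the bounds $\left[\frac{n-m-1+j-i}{r}\right]$ of the statement. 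The degenerate case $n=m$ is immediate: the numerator $x_1^n x_2^m-\zeta^{l(n-m)}x_1^m x_2^n$ vanishes identically, so only the diagonal scalar term survives, giving the third line of each formula.

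The main obstacle is the bookkeeping of the summation ranges. Because orthogonality now selects a shifted residue class, the smallest admissible value of the new summation index is $0$ or $1$ according to the sign of $j-i$ (for the $v_{T_1}\to v_{T_2}$ transitions) or of $i-j$ (for the $v_{T_2}\to v_{T_1}$ transitions), which is precisely why the lower limit alternates between $k=0$ and $k=1$ across the four cases; care is also needed to confirm that the floor functions in the upper limits count exactly the monomials with nonnegative exponents. Verifying that these endpoint choices are correct for every residue of $j-i$ modulo $r$ and both relations $n>m$ and $n<m$ is the one genuinely delicate point, the rest being the routine geometric-series and root-of-unity manipulation already used for $\lambda_i$ and $\lambda^i$.
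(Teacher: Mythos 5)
Your proposal follows essentially the same route as the paper's own proof: apply the commutation formulas of Proposition \ref{propact}, evaluate the diagonal and reflection group elements on $v_{T_1},v_{T_2}$ (picking up the phase $\zeta^{\pm l(j-i)}$ on the off-diagonal part), expand the divided difference as a geometric series, and use root-of-unity orthogonality to select the shifted residue class, which is exactly what produces the shifted exponents and the alternating lower limits $k=0$ versus $k=1$. The only cosmetic difference is that the paper computes the first formula in full and obtains the other three by the symmetries $i\leftrightarrow j$, $x_1\leftrightarrow x_2$, $v_{T_1}\leftrightarrow v_{T_2}$, while you describe the uniform computation for all four; both handle the $n=m$ degeneration and the endpoint bookkeeping identically.
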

\section{Singular polynomials}

\begin{definition}
A singular polynomial is an element $m\in \Delta(\lambda)$ such that $y_1\cdot m=0$ and $y_2\cdot m=0$.
\end{definition}
It is a consequence of the definition of the standard module $\Delta(V)$ that for any $\mathbb{H}$-module $M$ the map
$$\mathrm{Hom}_{\mathbb{H}}(\Delta(V),M)\xrightarrow{\sim} \mathrm{Hom}_{\CC W}(V,\mathrm{Sing}(M))$$
defined by
$$\phi\mapsto \phi|_{V}$$
is a bijection, where $\mathrm{Sing}(M)=\{m\in M | y\cdot m=0 \quad \forall y \in \mathfrak{h}\}$.
 For this reason we describe in the following subsections the singular polynomials in our three standard modules cases. These polynomials are described in \cite{SG1}, but we put it in an explicit form in order to deal with the poles that may appear.
\subsection{Case 1: $\lambda=\lambda_i$}
\begin{proposition} \label{sing1}
The following polynomials are singular polynomials in $\Delta(\lambda_i)$:

\begin{enumerate}
  \item $(x_1^r-x_2^r)^k\otimes v_t$  when  $c_0=\frac{k}{2}$ for positive odd $k$.
  \item $x_1^nx_2^n\otimes v_t$  when  $n-d_i+d_{i-n}=0. $
  \item For $kr<n< (k+1)r$ , $\displaystyle\alpha_l={k \choose l}$ and $\displaystyle\beta_l=\frac{c_0(c_0-1)...(c_0-l)}{(c_0-k)(c_0-(k-1))...(c_0-(k-l))}$ \\
  $p(x_1,x_2)=x_1^n+\displaystyle\sum_{l=0}^{\left[\frac{k}{2}\right]}\alpha_l\beta_lx_1^{n-(k-l)r}x_2^{(k-l)r}+\sum_{l=1}^{\left[\frac{k-1}{2}\right]}\alpha_l\beta_{l-1}x_1^{n-lr}x_2^{lr} $\\
  when  $n-d_i+d_{i-n}-c_0r=0$  (if $c_0=m$ is an integer that indeterminates some $\beta_l$, then the polynomial is $(c_0-m)p(x_1,x_2)$).

\end{enumerate}
\end{proposition}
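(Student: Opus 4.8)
\emph{Plan.} The whole proposition is a direct verification: for each listed element I will apply the Dunkl operators $y_1$ and $y_2$ using the explicit action formulas of Proposition \ref{actn1} and check that the result is zero under the stated parameter condition. Two preliminary remarks streamline this. First, the formulas for $y_1$ and $y_2$ in Proposition \ref{actn1} are interchanged by the substitution $(x_1,n)\leftrightarrow(x_2,m)$, so in the symmetric situations it suffices to compute $y_1$ and invoke this symmetry for $y_2$. Second, every monomial occurring in the three polynomials differs from the leading one by multiples of $r$ in each variable; since $d_{i-a}=d_i$ whenever $a\equiv 0 \pmod r$, the quantity $d_i-d_{i-a}$ is constant across each polynomial and is pinned down by the stated condition, so the ``diagonal'' coefficients produced by $y_1$ and $y_2$ simplify to affine-linear expressions in the exponents.

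For part (2) this is immediate: putting $n=m$ in Proposition \ref{actn1} makes both floor-sums empty, giving $y_1\cdot(x_1^nx_2^n\otimes v_t)=(n-d_i+d_{i-n})x_1^{n-1}x_2^n\otimes v_t$ and the mirror statement for $y_2$, so $n-d_i+d_{i-n}=0$ is exactly what kills both. For part (1) I expand $(x_1^r-x_2^r)^k=\sum_j(-1)^j\binom{k}{j}x_1^{(k-j)r}x_2^{jr}$, apply $y_1$ termwise, and collect the coefficient of a fixed $x_1^{cr-1}x_2^{dr}$ with $c+d=k$. Because all exponents are multiples of $r$, the diagonal terms contribute $(c-c_0)r$ or $cr$ according to $c>d$ or $c<d$ (the case $c=d$ never occurs since $k$ is odd), while the geometric off-diagonal sums contribute blocks of $\pm c_0 r$. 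After reindexing, these assemble into truncated alternating binomial sums; using $\sum_{t=0}^{j}(-1)^t\binom{k}{t}=(-1)^j\binom{k-1}{j}$ and $(k-d)\binom{k}{d}=k\binom{k-1}{d}$ together with $(-1)^k=-1$, the coefficient collapses to $c\binom{k}{d}(-1)^d-2c_0\,\sigma_{c,d}$ with $\sigma_{c,d}$ the appropriate truncated sum, and at $c_0=k/2$ this vanishes identically. The antisymmetry of $(x_1^r-x_2^r)^k$ under $x_1\leftrightarrow x_2$ then gives $y_2$ for free.

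Part (3) is the substantive case and uses the same method. Writing $n=kr+\rho$ with $0<\rho<r$, every monomial of $p$ has the form $x_1^{n-pr}x_2^{pr}$ for $0\le p\le k$, with coefficient $\gamma_p$ equal to $1$ for $p=0$, to $\binom{k}{p}\beta_{p-1}$ in the lower range, and to $\binom{k}{p}\beta_{k-p}$ in the upper range. The condition $n-d_i+d_{i-n}-c_0r=0$ makes the diagonal coefficient produced by $y_1$ on the $x_1^{n-pr}x_2^{pr}$ term equal to $-pr$ when $n-pr>pr$ and to $(c_0-p)r$ when $n-pr\le pr$, and the off-diagonal floor-sums contribute further multiples of $c_0 r$. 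I will collect the coefficient of each $x_1^{n-pr-1}x_2^{pr}$ in $y_1\cdot p$ and show its vanishing reduces to the defining relations $\beta_0=c_0/(c_0-k)$ and $\beta_l/\beta_{l-1}=(c_0-l)/(c_0-(k-l))$ of the $\beta_l$, together with the pinning condition; the computation for $y_2$ is analogous and yields compatible constraints. When $c_0$ is an integer among $k-l,\dots,k$ some $\beta_l$ acquires a pole, and clearing the denominator by the stated factor $(c_0-m)$ turns $p$ into a genuine polynomial while the same coefficient identities, now read as polynomial identities in $c_0$, persist at the specialization.

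The main obstacle is the bookkeeping in part (3): the branch of Proposition \ref{actn1} that applies ($n>m$ versus $n\le m$) switches as $p$ crosses the middle value $n/(2r)$, which is precisely why $p$ is written as two separate sums, one weighted by $\beta_l$ and one by $\beta_{l-1}$. Tracking how the diagonal term of a single monomial combines with the tails of the geometric off-diagonal sums coming from all the other monomials, and verifying that the resulting telescoping is governed exactly by the ratio $(c_0-l)/(c_0-(k-l))$, is the delicate step. I expect the cleanest route is to reorganize the total sum by the target exponent $p$ and check the resulting two-term recursion separately for $p$ below, at, and above the middle, then dispose of the pole locus by clearing denominators as above.
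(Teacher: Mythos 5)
Your overall strategy --- apply the explicit formulas of Proposition \ref{actn1} and check that the coefficient of each target monomial vanishes --- is exactly what the paper does: its tables are nothing but your ``collect the coefficient of $x_1^{n-pr-1}x_2^{pr}$'' step written out, and your identification of the diagonal contributions as $-pr$ (when $n-pr>pr$) and $(c_0-p)r$ (when $n-pr\le pr$) under the pinning condition $n-d_i+d_{i-n}=c_0r$ matches the entries of those tables, as does your description of the coefficients $\gamma_p$. The one place you genuinely diverge is part (1): the paper disposes of $(x_1^r-x_2^r)^k$ by citing Proposition 5.2 of Dunkl--de~Jeu--Opdam, whereas you recompute it via the alternating binomial expansion; your reduction of the coefficient of $x_1^{cr-1}x_2^{dr}$ to $(-1)^d\bigl(c\binom{k}{d}-2c_0\binom{k-1}{d}\bigr)=(-1)^d(k-2c_0)\binom{k-1}{d}$ does check out in both regimes $c>d$ and $c<d$ (using $k$ odd), so your route buys a self-contained argument at the cost of some bookkeeping. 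For part (3), however, your blueprint stops where the real work begins: the paper's proof consists precisely of executing your ``two-term recursion,'' namely the pairing of column $i$ with column $k-i+1$ via $(k-i+1)\alpha_{i-1}=i\alpha_i$, followed by an induction on $i$ using $\beta_i=\tfrac{c_0-i}{c_0-k+i}\beta_{i-1}$ to show each column sums to zero; that induction, not the setup, is the content of the statement, so part (3) of your proposal is a correct outline rather than a proof. Finally, both you and the paper assert without computation that the $y_2$ system coincides with the $y_1$ system; for the asymmetric polynomial of part (3) this is a genuinely separate check and deserves at least a sentence of justification.
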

\begin{proof}
The fact that the polynomial in $(a)$ is a singular polynomial is Proposition 5.2 in \cite{Dun3}. Using our formulas and the fact  that $n-d_i+d_{i-n}=0$ we have:
$$y_1\cdot x_1^nx_2^n=(n-d_i+d_{i-n})x_1^{n-1}x_2^n=0$$
$$y_2\cdot x_1^nx_2^n=(n-d_i+d_{i-n})x_1^{n}x_2^{n-1}=0$$
this proves that $(b)$ is a singular polynomial.
To prove the that $(c)$ is a singular polynomial we construct a table that shows how $y_1$ and $y_2$ act in the monomials that appear in the polynomial. These tables correspond to the matrix system for the action of $y_1$ in the polynomial. The size of the tables depends on $k$. The tables are different if $k$ is even or odd. The following tables for the action of $y_1$ when $k=5$ and $k=6$ give an idea of how to construct a table in general.
\begin{center}
\renewcommand{\arraystretch}{1.5}
\begin{tabular}{|l||c|c|c|c|c|c|c|}
  \hline
   $k=5$  &  $x_1^{n-r-1}x_2^r$ & $x_1^{n-2r-1}x_2^{2r}$ & $x_1^{n-3r-1}x_2^{3r}$ & $x_1^{n-4r-1}x_2^{4r}$ & $x_1^{n-5r-1}x_2^{5r}$  \\ \hline \hline
  $x_1^n$ &       $-c_0r$     &  $-c_0r$     &  $-c_0r$     &   $-c_0r$    &  $-c_0r$          \\ \hline
  $x_1^{n-r}x_2^r$ &       $-r\alpha_1\beta_0$   &   $-c_0r\alpha_1\beta_0$    &  $-c_0r\alpha_1\beta_0$     &   $-c_0r\alpha_1\beta_0$    &     0      \\ \hline
  $x_1^{n-2r}x_2^{2r}$ &        0    &  $-2r\alpha_2\beta_1$    &    $-c_0r\alpha_2\beta_1$   &  0    &     0      \\ \hline
  $x_1^{n-3r}x_2^{3r}$ &       0     &     0 &   $(c_0-3)r\alpha_2\beta_2$   &    0  &         0  \\ \hline
  $x_1^{n-4r}x_2^{4r}$ &       0     & $c_0r\alpha_1\beta_1$     &   $c_0r\alpha_1\beta_1$   &  $(c_0-4)r\alpha_1\beta_1$   &        0   \\ \hline
  $x_1^{n-5r}x_2^{5r}$ &$c_0r\alpha_0\beta_0$&  $c_0r\alpha_0\beta_0$    &    $c_0r\alpha_0\beta_0$  &    $c_0r\alpha_0\beta_0$  &     $(c_0-5)r\alpha_0\beta_0$      \\
  \hline
\end{tabular}
\end{center}
\begin{center}
\renewcommand{\arraystretch}{1.5}
\begin{tabular}{|l||c|c|c|c|c|c|c|c|}
  \hline
   $k=6$  &  $x_1^{n-r-1}x_2^{r}$ & $x_1^{n-2r-1}x_2^{2r}$ & $x_1^{n-3r-1}x_2^{3r}$ & $x_1^{n-4r-1}x_2^{4r}$ & $x_1^{n-5r-1}x_2^{5r}$ & $x_1^{n-6r-1}x_2^{6r}$ \\ \hline \hline
  $x_1^{n}$ &       $-c_0r$     &  $-c_0r$     &  $-c_0r$     &   $-c_0r$    &  $-c_0r$  &     $-c_0r$   \\ \hline
  $x_1^{n-r}x_2^{r}$ & $-r\alpha_1\beta_0$ & $-c_0r\alpha_1\beta_0$    &  $-c_0r\alpha_1\beta_0$     &   $-c_0r\alpha_1\beta_0$    &     $-c_0r\alpha_1\beta_0$   & 0 \\ \hline
  $x_1^{n-2r}x_2^{2r}$ &        0    &  $-2r\alpha_2\beta_1$    &    $-c_0r\alpha_2\beta_1$   &  $-c_0r\alpha_2\beta_1$    &     0    & 0 \\ \hline
  $x_1^{n-3r}x_2^{3r}$ &       0     &     0 &   $-3r\alpha_3\beta_3$   &    0  &         0  & 0\\ \hline
  $x_1^{n-4r}x_2^{4r}$ &       0     & 0 & $c_0r\alpha_2\beta_2$     &    $(c_0-4)r\alpha_2\beta_2$   &        0  &0 \\ \hline
  $x_1^{n-5r}x_2^{5r}$ & 0&  $c_0r\alpha_1\beta_1$   &  $c_0\alpha_1\beta_1$    &    $c_0r\alpha_1\beta_1$  &       $(c_0-5)r\alpha_1\beta_1$ & 0    \\ \hline
  $x_1^{n-6r}x_2^{6r}$ & $c_0r\alpha_0\beta_0$ & $c_0r\alpha_0\beta_0$ & $c_0r\alpha_0\beta_0$ & $c_0r\alpha_0\beta_0$ & $c_0r\alpha_0\beta_0$&$(c_0-6)r\alpha_0\beta_0$\\
  \hline
\end{tabular}
\end{center}
We need to prove that the columns of the table add up zero. The prove works for a table of any size. First we prove that, if the $i$ column add up zero, then the $k-i+1$ column will add up zero. For this, these two columns involved only differ in the factors of the middle. In the $k-i+1$ column we only have a $(c_0-(k-i+1))r\alpha_{i-1}\beta_{i-1}$ and in the $i$ column we have $-ir\alpha_{i}\beta_{i-1}$ and $c_0r\alpha_{i-1}\beta_{i-1}$. Considering this we only need to prove that
$$(c_0-(k-i+1))\alpha_{i-1}\beta_{i-1}r=c_0r\alpha_{i-1}\beta_{i-1}-ir\alpha_{i}\beta_{i-1},$$ and this is only true if $$(k-i+1)\alpha_{i-1}=i\alpha_{i}$$
which is clearly true if we consider the definition of $\alpha_l$.

Now we prove that the $k-i+1$ column, for $1\leq i \leq \left[\frac{k+1}{2}\right]$, add up zero. The sum of these columns is
$$-c_0r-\displaystyle\sum_{l=1}^{i-1} c_0r\alpha_l\beta_{l-1}+\displaystyle\sum_{l=1}^{i-1}c_0r\alpha_{l-1}\beta_{l-1}+(c_0-(k-i+1))r\alpha_n\beta_n$$
for $i=1,2,...,\left[\frac{k+1}{2}\right]$. If we rewrite this last expression, we need to prove that $$-c_0\left(1+\displaystyle\sum_{l=1}^{i-1}(\alpha_l-\alpha_{l-1})\beta_{l-1}\right)+(c_0-(k-i+1))\alpha_{i-1}\beta_{i-1}=0.$$

We proceed by induction. For $i=1$ we have $-c_0+(c_0-k)\alpha_0\beta_0$=0, and considering the definition of $\alpha_0$
 and $\beta_0$ we get $$-c_0+(c_0-k)\frac{c_0}{c_0-k}=0.$$ Now assuming it works for $i$ we need to prove that
 $$-c_0\left(1+\displaystyle\sum_{l=1}^{i}(\alpha_l-\alpha_{l-1})\beta_{l-1}\right)+(c_0-(k-i))\alpha_{i}\beta_{i}=0.$$
In order to prove this, we have:

$\begin{array}{cl}
   & -c_0\left(1+\displaystyle\sum_{l=1}^{i}(\alpha_l-\alpha_{l-1})\beta_{l-1}\right)+(c_0-(k-i))\alpha_{i}\beta_{i} \\
  = & -c_0\left(1+\displaystyle\sum_{l=1}^{i-1}(\alpha_l-\alpha_{l-1})\beta_{l-1}+(\alpha_{i}-\alpha_{i-1})\beta_{i-1}\right)+(c_0-(k-i))\alpha_{i}\beta_{i} \\
  = & \cancel{-c_0\left(1+\displaystyle\sum_{l=1}^{i-1}(\alpha_l-\alpha_{l-1})\beta_{l-1}\right)}+\cancel{(c_0-(k-i+1))\alpha_{i-1}\beta_{i-1}} \\
    & -(c_0-(k-i+1))\alpha_{i-1}\beta_{i-1}-c_0(\alpha_{i}-\alpha_{i-1})\beta_{i-1}+(c_0-(k-i))\alpha_{i}\beta_{i} \\
  = & -c_0\alpha_{i-1}\beta_{i-1}+(k-i+1)\alpha_{i-1}\beta_{i-1}-c_0\left(\displaystyle\frac{k-i+1}{i}\alpha_{i-1}-\alpha_{i-1}\right)\beta_{i-1}\\
    & +\cancel{(c_0-k+i)}\displaystyle  \frac{k-i+1}{i}\alpha_{i-1}\frac{c_0-i}{\cancel{c_0-k+i}}\beta_{i-1}\\
  = & \left(-c_0+k-i+1-c_0\displaystyle\frac{k-i+1}{i}+c_0+\displaystyle\frac{(k-i+1)(c_0-i)}{i}\right)\alpha_{i-1}\beta_{i-1}\\
  = & \displaystyle\frac{(k-i+1)(i)-c_0(k-i+1)+(k-i+1)(c_0-i)}{i}\alpha_{i-1}\beta_{i-1}\\
  = & 0.
\end{array}$
\newline
We used the induction hypothesis and the fact that
 \begin{center}
 $\alpha_{i}=\displaystyle\frac{k-i+1}{i}\alpha_{i-1}$ and $\beta_{i}=\displaystyle\frac{c_0-i}{c_0-k+i}\beta_{i-1}$.
 \end{center}

  In addition, if $c_0=m$ and $c_0-m$ indeterminate some $\beta_l$, then the polynomial
 that we are looking for is $(c_0-m)p(x_1,x_2)$. This new polynomial works, because the factor $(c_0-m)$ appears almost in degree one in the denominator of
 some coefficients. Now we need to prove that $y_2$ annihilate the polynomial too, but the corresponding table for $y_2$ shows us the same system to be solved as before.

\end{proof}

\subsection{Case 2: $\lambda=\lambda^i$}
\begin{proposition}\label{sing2}
The following polynomials are singular polynomials in $\Delta(\lambda^i)$:

\begin{enumerate}
  \item $(x_1^r-x_2^r)^k\otimes v_t$  when  $c_0=-\frac{k}{2}$ for positive odd $k$.
  \item $x_1^nx_2^n\otimes v_t$  when  $n-d_i+d_{i-n}=0 $
  \item For $kr<n<(k+1)r$ , $\displaystyle\alpha_l={k \choose l}$ and $\displaystyle\beta_l=\frac{c_0(c_0+1)...(c_0+l)}{(c_0+k)(c_0+(k-1))...(c_0+(k-l))}$
  $$p(x_1,x_2)=x_1^n+\displaystyle\sum_{l=0}^{\left[\frac{k}{2}\right]}\alpha_l\beta_lx_1^{n-(k-l)r}x_2^{(k-l)r}+\sum_{l=1}^{\left[\frac{k-1}{2}\right]}\alpha_l\beta_{l-1}x_1^{n-lr}x_2^{lr} $$
  when  $n-d_i+d_{i-n}+c_0r=0$  (if $c_0=-m$ is an integer that indeterminates some $\beta_l$, then the polynomial is $(c_0+m)p(x_1,x_2)$).

\end{enumerate}
\end{proposition}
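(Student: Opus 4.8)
The plan is to deduce Proposition \ref{sing2} from Proposition \ref{sing1} by a parameter symmetry, rather than repeating the table computation. The key observation is that the action of $y_1$ and $y_2$ on $\Delta(\lambda^i)$ recorded in Proposition \ref{actn2} is obtained from the action on $\Delta(\lambda_i)$ in Proposition \ref{actn1} by the single substitution $c_0\mapsto -c_0$. Indeed, comparing the two propositions monomial by monomial: in the regime $n>m$ the leading coefficient $(n-d_i+d_{i-n}-c_0r)$ becomes $(n-d_i+d_{i-n}+c_0r)$ and the correction sum $-c_0r\sum$ becomes $+c_0r\sum$, while in the regime $n\leq m$ the leading coefficient $(n-d_i+d_{i-n})$ is unchanged and the sum $+c_0r\sum$ becomes $-c_0r\sum$; the analogous sign flips occur for $y_2$. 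Each of these is exactly the effect of replacing $c_0$ by $-c_0$, the coefficients involving the $d_i$ being untouched. First I would record this observation as a short lemma.

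Granting it, a polynomial $p(x_1,x_2)\otimes v_t$ is singular in $\Delta(\lambda^i)$ at parameter $c_0$ precisely when it is singular in $\Delta(\lambda_i)$ at parameter $-c_0$. Applying Proposition \ref{sing1} with $c_0$ replaced by $-c_0$ then yields all three statements at once. For (a), the condition $c_0=\frac{k}{2}$ of \ref{sing1} becomes $-c_0=\frac{k}{2}$, i.e. $c_0=-\frac{k}{2}$. For (b), the vanishing condition $n-d_i+d_{i-n}=0$ does not involve $c_0$ and is therefore unchanged. For (c), the condition $n-d_i+d_{i-n}-c_0r=0$ becomes $n-d_i+d_{i-n}+c_0r=0$, and $\alpha_l=\binom{k}{l}$ is independent of $c_0$, so only the $\beta_l$ need attention.

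The one computation I would carry out in full is the transformation of the $\beta_l$, since this is the only place where the substitution could fail to reproduce the stated formula. Writing $\beta_l^{c_0}=\frac{c_0(c_0-1)\cdots(c_0-l)}{(c_0-k)(c_0-(k-1))\cdots(c_0-(k-l))}$ for the coefficient in \ref{sing1}, both numerator and denominator are products of $l+1$ linear factors, so replacing $c_0$ by $-c_0$ extracts a factor $(-1)^{l+1}$ from each; these cancel and one obtains $\frac{c_0(c_0+1)\cdots(c_0+l)}{(c_0+k)(c_0+(k-1))\cdots(c_0+(k-l))}$, which is exactly the $\beta_l$ of the statement. The parenthetical clause transforms the same way: if $-c_0=m$ indeterminates some $\beta_l$, i.e. $c_0=-m$, one clears the pole by multiplying by $(-c_0-m)=-(c_0+m)$, which up to an irrelevant scalar is the stated $(c_0+m)p(x_1,x_2)$.

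I expect no genuine obstacle: the heavy lifting was already done in the proof of Proposition \ref{sing1}, and the present statement is its formal shadow under $c_0\mapsto -c_0$; the closest thing to a delicate point is the $\beta_l$ sign-cancellation and the indeterminate-parameter clause just treated. If one prefers a conceptual justification of the symmetry rather than a formula-level check, it can be supplied by observing that $S^{\lambda^i}$ is the twist of $S^{\lambda_i}$ by the linear character of $W=G(r,1,2)$ that is $-1$ on the transposition-type reflections and trivial on the diagonal generators; tensoring by this character is an auto-equivalence of category $\mathcal{O}$ that negates the parameter $c_0$ attached to that reflection class, carrying $\Delta(\lambda_i)$ to $\Delta(\lambda^i)$ and singular vectors to singular vectors. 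Either route gives the result, but the formula-level comparison with Proposition \ref{actn2} is the most direct to write down.
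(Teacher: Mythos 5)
Your proposal is correct and is essentially the paper's own argument: the paper's proof of this proposition consists precisely of the remark that one repeats the $\lambda_i$ case after changing $c_0$ into $-c_0$. You have merely made explicit the verification (via Proposition \ref{actn2} versus Proposition \ref{actn1}, and the sign-cancellation in the $\beta_l$) that the paper leaves implicit.
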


\begin{proof}
The proof in this case is as in the $\lambda_i$ case. We only need to change $c_0$ into $-c_0$.
\end{proof}

\subsection{Case 3: $\lambda=\lambda_{i,j}$}
\begin{proposition} \label{sing3}
For $\lambda_{i,j}$ and $i<j$
\begin{enumerate}[1)]
\item  We have the two following singular polynomials
\begin{enumerate}
          \item If $kr<n+j-i< (k+1)r$, $n-d_i+d_{i-n}=0$ , $s_t=j-i-d_j+d_i-tr$.
              $$p(x_1,x_2)=\left(x_1^n+\displaystyle\sum_{l=1}^{k-1}b_lx_1^{n-lr}x_2^{lr}\right) \otimes v_{T_1}+\displaystyle\sum_{l=1}^ka_l x_1^{n-lr+j-i}x_2^{lr-j+i}\otimes v_{T_2}$$

          \item If $(k-1)r<n+i-j < kr$, $n-d_j+d_{j-n}=0$ , $s_t=i-j-d_i+d_j-(t-1)r$.
              $$p(x_1,x_2)=\left(x_2^n+\displaystyle\sum_{l=1}^{k-1}b_lx_1^{lr}x_2^{n-lr}\right) \otimes v_{T_1}+\displaystyle\sum_{l=0}^{k-1}a_{l+1} x_1^{lr+j-i}x_2^{n-lr-j+i}\otimes v_{T_2}$$
             \end{enumerate}
Where the coefficients satisfy the recursive system
\begin{center}\begin{itemize}
               \item $s_1a_1=c_0r$
               \item $s_la_l=s_{k-l+1}a_{k-l+1}$ for $1\leq l< \left[ \frac{k+1}{2}\right]$
               \item $lb_l=(k-l)b_{k-l}$ for $1\leq l< \left[ \frac{k+1}{2}\right]$
               \item $a_l=\frac{c_0r}{s_l}\left(\displaystyle\sum_{j=1}^{l-1}\frac{k-2j}{j}b_{k-j}+1\right)$
               \item $b_l=\frac{c_0}{l}\left(\displaystyle\sum_{j=0}^{l-1}\left(\frac{(k-2j-1)r}{s_{k-j}}\right)a_{j+1}\right)$
             \end{itemize}\end{center}
For this polynomials, if $s_t=0$ for some $t$, then the polynomial is $s_t\cdot p(x_1,x_2)$.
\item
For $1\leq l \leq \left[ \frac{k+1}{2}\right]$ define
\begin{center}\begin{itemize}
              \item $a_l=\frac{1}{l!}\displaystyle \frac{c_0(c_0-1)...(c_0-(l-1))k(k-1)...(k-(l-1)}{(c_0-k)(c_0-(k-1))...(c_0-(k-(l-1))}$
              \item $a_{k-l}=\displaystyle \frac{1}{l!}\frac{c_0(c_0-1)(c_0-2)...(c_0-l)k(k-1)...(k-(l-1))}{(c_0-k)(c_0-(k-1))...(c_0-(k-l))}$
              \item $a_k=\displaystyle \frac{c_0}{c_0-k}$
              \end{itemize}
              \end{center}
We have the two following singular polynomials
\begin{enumerate}
             \item If $n=i-j+(k+1)r$ , $n=d_i-d_j+rc_0$.\\
              $p(x_1,x_2)=(x_1^n\otimes v_{T_1}-x_2^n\otimes v_{T_2})+\displaystyle\sum_{l=1}^{k}a_l\left(x_1^{n-rl}x_2^{rl}\otimes v_{T_1}-x_1^{rl}x_2^{n-rl}\otimes v_{T_2} \right)$
             \item If $n=j-i+kr$ , $n=d_j-d_i+rc_0$.\\
             $p(x_1,x_2)=(x_1^n\otimes v_{T_2}-x_2^n\otimes v_{T_1})+\displaystyle\sum_{l=1}^{k}a_l\left(x_1^{n-rl}x_2^{rl}\otimes v_{T_2}-x_1^{rl}x_2^{n-rl}\otimes v_{T_1} \right)$
            \end{enumerate}

     \item    For $1\leq l \leq \left[ \frac{k+1}{2}\right]$ define
\begin{center}\begin{itemize}
             \item $a_l=\frac{1}{l!}\displaystyle \frac{c_0(c_0+1)...(c_0+(l-1))k(k-1)...(k-(l-1)}{(c_0+k)(c_0+(k-1))...(c_0+(k-(l-1))}$
             \item $a_{k-l}=\displaystyle \frac{1}{l!}\frac{c_0(c_0+1)(c_0+2)...(c_0+l)k(k-1)...(k-(l-1))}{(c_0+k)(c_0+(k-1))...(c_0+(k-l))}$
             \item $a_k=\displaystyle \frac{c_0}{c_0+k}$
           \end{itemize}\end{center}
We have the two following singular polynomials
\begin{enumerate}
          \item If $n=i-j+(k+1)r$ , $n=d_i-d_j-rc_0$. \\
          $p(x_1,x_2)=(x_1^n\otimes v_{T_1}+x_2^n\otimes v_{T_2})+\displaystyle\sum_{l=1}^{k}a_l\left(x_1^{n-rl}x_2^{rl}\otimes v_{T_1}+x_1^{rl}x_2^{n-rl}\otimes v_{T_2} \right)$
           \item If $n=j-i+kr$ , $n=d_j-d_i-rc_0$.\\
           $p(x_1,x_2)=(x_1^n\otimes v_{T_2}+x_2^n\otimes v_{T_1})+\displaystyle\sum_{l=1}^{k}a_l\left(x_1^{n-rl}x_2^{rl}\otimes v_{T_2}+x_1^{rl}x_2^{n-rl}\otimes v_{T_1} \right)$
           \end{enumerate}
\end{enumerate}
 For the polynomials $(2.a)$, $(2.b)$ $(3.a)$ and $(3.b)$, if $k$ is an even number we compute $a_{\frac{k}{2}}$ considering the definition of $a_l$ instead the definition of $a_{k-l}$. If $c_0$ is an integer $m$ such that the denominator of some $a_l$ is zero, then the polynomials are $(c_0+m)\cdot p(x_1,x_2)$ or $(c_0-m)\cdot p(x_1,x_2)$.

\end{proposition}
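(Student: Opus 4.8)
The plan is to show that each displayed $p\in\Delta(\lambda_{i,j})$ satisfies $y_1\cdot p=0$ and $y_2\cdot p=0$ by expanding the left-hand sides with the explicit action formulas of Proposition \ref{actn3}. Writing $p$ as a finite sum of terms $x_1^ax_2^b\otimes v_{T_1}$ and $x_1^ax_2^b\otimes v_{T_2}$, the operator $y_1$ sends each term to a \emph{diagonal} contribution (the scalar $a-d_i+d_{i-a}$, resp. $a-d_j+d_{j-a}$, times $x_1^{a-1}x_2^b$ in the same basis vector) plus an \emph{off-diagonal} contribution (the $rc_0$-sum, which lands in the opposite basis vector). Collecting, for a fixed monomial, the coefficient of $\bullet\otimes v_{T_1}$ and of $\bullet\otimes v_{T_2}$ in $y_1\cdot p$ and equating it to zero produces a finite linear system in the unknowns $a_l,b_l$. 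The content of the proposition is exactly that the stated recursions (part (1)) or the stated closed forms (parts (2),(3)) solve this system, and that the spectral hypotheses $n-d_i+d_{i-n}=0$, $n=d_i-d_j+rc_0$, $n=d_j-d_i-rc_0$, etc., are precisely the conditions forcing the coefficient of the top monomial to vanish.

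First I would exploit the reflection $I_2^t\in W$, which acts on $\Delta(\lambda_{i,j})$ by $x_1\leftrightarrow x_2$, $v_{T_1}\leftrightarrow v_{T_2}$, and which satisfies $\overline{I_2^t}\,y_1\,\overline{I_2^t}^{-1}=y_2$ by the relations defining $\HH$. Hence $y_2\cdot p=\overline{I_2^t}\cdot\bigl(y_1\cdot(\overline{I_2^t}\cdot p)\bigr)$, so if $\overline{I_2^t}\cdot p=\pm p$ then $y_1\cdot p=0$ already forces $y_2\cdot p=0$. A short check shows that the polynomials in $(2.a)$, $(2.b)$ are $(-1)$-eigenvectors and those in $(3.a)$, $(3.b)$ are $(+1)$-eigenvectors of $\overline{I_2^t}$, so for parts (2) and (3) it suffices to verify $y_1\cdot p=0$. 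The two polynomials of part (1) are not $\overline{I_2^t}$-eigenvectors, so there I would carry out the $y_1$ and $y_2$ computations separately; they are mirror images of one another under the interchange $x_1\leftrightarrow x_2$, $i\leftrightarrow j$, so only one needs to be written in full.

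For the representative computation I would expand $y_1\cdot p$ monomial by monomial via Proposition \ref{actn3}, taking care that as $l$ runs over the sum the exponent pair $(n-rl,rl)$ crosses the diagonal, so that both the $a>b$ and $a<b$ branches are invoked; this is the source of the remark that when $k$ is even the central coefficient $a_{k/2}$ must be read from the $a_l$-formula rather than the $a_{k-l}$-formula, corresponding to the monomial $x_1^{n-rk/2}x_2^{rk/2}$ at which the branch changes. Matching the coefficient of each surviving monomial then yields, in part (1), the relations $s_1a_1=c_0r$, $s_la_l=s_{k-l+1}a_{k-l+1}$ and $lb_l=(k-l)b_{k-l}$ together with the summation formulas for $a_l,b_l$; in parts (2),(3) it yields a two-term recursion whose solution is the displayed product of ratios (for instance $a_{k-l}=a_l\,\tfrac{c_0-l}{c_0-(k-l)}$). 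That these closed forms solve the recursion is an induction on $l$ of exactly the type carried out in the proof of Proposition \ref{sing1}, using the elementary ratios $\alpha_l=\tfrac{k-l+1}{l}\alpha_{l-1}$ and the analogous step for the $\beta$'s.

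The main obstacle is bookkeeping rather than conceptual: one must pin down precisely which monomials are produced by the floor-truncated $rc_0$-sums, whose upper limits depend on where $n+j-i$ (or $n+i-j$) sits relative to the multiples of $r$, and then confirm that the off-diagonal output of the $v_{T_1}$-part cancels the diagonal output of the $v_{T_2}$-part term by term (and conversely). This index-chasing, where an apparent off-by-one in the starting value of the summation index is governed by the standing assumption $i<j$, is the delicate point. Finally I would treat the degenerate parameters: when $c_0$ is an integer making some $s_t$ or some denominator of $a_l$ vanish, the coefficients acquire a simple pole, which I clear by rescaling $p$ to $s_t\cdot p$ (respectively $(c_0\pm m)\cdot p$). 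Because $y_1$ and $y_2$ act by expressions that are polynomial in the coefficients of $p$, the identity $y_i\cdot p=0$ persists under this rescaling and specialization, so the cleared polynomial is an honest singular element of $\Delta(\lambda_{i,j})$, completing the argument.
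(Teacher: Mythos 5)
Your proposal is correct and follows essentially the same route as the paper: the paper's tables for $k=4$ and $k=5$ are precisely the linear system you describe (obtained by expanding $y_1\cdot p$ via Proposition \ref{actn3} and equating the coefficient of each surviving monomial to zero), and the verification that the recursive system and the closed-form coefficients solve it is the same induction on $l$, with the degenerate parameters handled by the same denominator-clearing rescaling. The only genuine divergence is your use of the conjugation $\overline{I_2^t}\,y_1\,\overline{I_2^t}^{\,-1}=y_2$ together with the $\pm1$-eigenvector property of the polynomials in parts (2) and (3) to dispose of the $y_2$ computation, where the paper simply asserts that the $y_2$ table represents the same system; your observation makes that assertion rigorous at essentially no extra cost.
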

\begin{proof}
As in the proof for $\lambda_i$ we construct tables for a specific value of $k$ in order to understand the action over the monomials involved, but the prove is given for any value of $k$.
\begin{enumerate}[1)]
  \item First we give the tables for $k=4$ and $k=5$. The following table is valid for the polynomial of case $(1.a)$. Define $N=n+j-i$ and for each monomial we write $x_1^m$ instead of $x_1^mx_2^{n-m}$.
  \begin{center}
\begin{adjustbox}{max width=\textwidth}
\newcolumntype{b}{>{\columncolor{Gray}}c}
\newcolumntype{r}{>{\columncolor{white}}c}
\begin{tabular}{|c||b|b|b||r|r|r|r|}
  \hline
  $k=4$ & $x_1^{n-r-1}$ & $x_1^{n-2r-1}$ & $x_1^{n-3r-1}$  & $x_1^{N-r-1}$ & $x_1^{N-2r-1}$ & $x_1^{N-3r-1}$ & $x_1^{N-4r-1}$  \\ \hline \hline
  \cellcolor{Gray} $x_1^n$ & 0 & 0 & 0  & $-c_0r$ & $-c_0r$ & $-c_0r$ & $-c_0r$   \\ \hline
  \cellcolor{Gray}$x_1^{n-r}$ & $-rb_1$ & 0  & 0 & 0 & $-c_0rb_1$  & $-c_0rb_1$ & 0  \\ \hline
  \cellcolor{Gray}$x_1^{n-2r}$ & 0 & $-2rb_2$ & 0  & 0 & 0  & 0 & 0  \\ \hline
  \cellcolor{Gray}$x_1^{n-3r}$ & 0 & 0 & $-3rb_3$  & 0  & $c_0rb_3$ & $c_0rb_3$ & 0 \\ \hline \hline
  \cellcolor{white}$x_1^{N-r}$ & $-c_0ra_1$ & $-c_0ra_1$ & $-c_0ra_1$  & $s_1a_1$ & 0 & 0 & 0   \\ \hline
  \cellcolor{white}$x_1^{N-2r}$ & 0 & $-c_0ra_2$ &  0 & 0 & $s_2a_2$ & 0 & 0   \\ \hline
  \cellcolor{white}$x_1^{N-3r}$ & 0 & $c_0ra_3$ & 0 & 0  & 0 & $s_3a_3$ & 0   \\ \hline
  \cellcolor{white}$x_1^{N-4r}$ & $c_0ra_4$ & $c_0ra_4$ & $c_0ra_4$  & 0 & 0 & 0 & $s_4a_4$   \\
  \hline
\end{tabular}
\end{adjustbox}
\end{center}

\begin{center}
\newcolumntype{b}{>{\columncolor{Gray}}c}
\newcolumntype{r}{>{\columncolor{white}}c}
\begin{adjustbox}{width=0.9\textwidth}
\begin{tabular}{|c||b|b|b|b||r|r|r|r|r|}
  \hline
  $k=5$  & $x_1^{n-1-r}$ & $x_1^{n-1-2r}$ & $x_1^{n-1-3r}$ & $x_1^{n-1-4r}$ & $x_1^{N-r-1}$ & $x_1^{N-2r-1}$ & $x_1^{N-3r-1}$ & $x_1^{N-4r-1}$ & $x_1^{N-5r-1}$ \\ \hline \hline
  \cellcolor{Gray} $x_1^{n}$ & 0 & 0 & 0 & 0 & $-c_0r$ & $-c_0r$ & $-c_0r$ & $-c_0r$ & $-c_0r$  \\ \hline
  \cellcolor{Gray}$x_1^{n-r}$ & $-rb_1$ & 0 & 0 & 0 & 0 & $-c_0rb_1$ & $-c_0rb_1$ & $-c_0rb_1$ & 0  \\ \hline
  \cellcolor{Gray}$x_1^{n-2r}$ & 0 & $-2rb_2$ & 0 & 0 & 0 & 0 & $-c_0rb_2$ & 0 & 0  \\ \hline
  \cellcolor{Gray}$x_1^{n-3r}$ & 0 & 0 & $-3rb_3$ & 0 & 0 & 0 & $c_0rb_3$ & 0 & 0 \\ \hline
  \cellcolor{Gray}$x_1^{n-4r}$ & 0 & 0 & 0 & $-4rb_4$ & 0 & $c_0rb_4$ & $c_0rb_4$ & $c_0rb_4$ & 0  \\ \hline \hline
  \cellcolor{white}$x_1^{N-r}$ & $-c_0ra_1$ & $-c_0ra_1$ & $-c_0ra_1$ & $-c_0ra_1$ & $s_1a_1$ & 0 & 0 & 0 & 0  \\ \hline
  \cellcolor{white}$x_1^{N-2r}$ & 0 & $-c_0ra_2$ & $-c_0ra_2$ & 0 & 0 & $s_2a_2$ & 0 & 0 & 0  \\ \hline
  \cellcolor{white}$x_1^{N-3r}$ & 0 & 0 & 0 & 0 & 0 & 0 & $s_3a_3$ & 0 & 0  \\ \hline
  \cellcolor{white}$x_1^{N-4r}$ & 0 & $c_0ra_4$ & $c_0ra_4$ & 0 & 0 & 0 & 0 & $s_4a_4$ & 0  \\ \hline
  \cellcolor{white}$x_1^{N-5r}$ & $c_0ra_5$ & $c_0ra_5$ & $c_0ra_5$ & $c_0ra_5$ & 0 & 0 & 0 & 0 & $s_5a_5$  \\
  \hline
\end{tabular}
\end{adjustbox}
\end{center}
The gray part in these tables means that the monomial is $\otimes_{v_{T_1}}$ and the white part means $\otimes_{v_{T_2}}$. Now we prove that each column add up zero if the $a_l$ and $b_l$ satisfy the system involved. The first column of the white part says that $a_1s_1=c_0r$ ,which is the first condition of our system. Now if we look only at the white part we can see that the $l$ column and the $k-l+1$ column have the same first $k$ entries. In the other entries  we have $a_ls_l$ in the $l$ column and $a_{k-l+1}s_{k-l+1}$ in the $k-l+1$ column. This implies that $a_ls_l=a_{k-l+1}s_{k-l+1}$
and this is the second part of the system. If we look at the gray part we can see that the last $k-1$ entries are the same in the $l$ column and in the $k-l$ column. We can also see that the first $k-1$ entries of these columns are $-lrb_l$ in the $l$ column and $-(k-l)rb_{k-l}$ in the $k-l$ column. This implies that $lb_l=(k-l)b_{k-l}$ which, is the third part of the system. For the fourth part we have to look  at the white part of the table. We have:
 $$a_ls_l=c_0r+\sum_{j=1}^{l-1}c_0rb_j-c_0rb_{k-j}$$ and if we combine this with $lb_l=(k-l)b_{k-l}$, we get
  $$a_ls_l=c_0r+\sum_{j=1}^{l-1}c_0r\frac{k-j}{j}b_{k-j}-c_0rb_{k-j}=c_0r\sum_{j=1}^{l-1}\frac{k-2j}{j}b_{k-j}+1.$$ This implies the fourth part of the system.
For the fifth part we have to look at the gray part of the table to get $$lrb_l=\sum_{j=0}^{l-1}-c_0ra_{j+1}+c_0ra_{k-j}$$ and  we can use $a_ls_l=a_{k-l+1}s_{k-l+1}$ to get
$$lrb_l=\sum_{j=0}^{l-1}-c_0r\frac{s_{k-j}}{s_{j+1}}a_{k-j}+c_0ra_{k-j}=c_0r\sum_{j=0}^{l-1}\frac{s_{k-j}-s_{j+1}}{s_{k-j}}a_{k-j}.$$
Finally we have
$s_{k-s}-s_{j+1}=(k-2j-1)r$ and this completes the last part of the system.

The table for $y_2$ represent the same system.

For the polynomial $(1.b)$ the tables represents the same system.
  \item Considering the monomials of the first singular polynomial and writing $x_1^m$ instead of $x_1^mx_2^{n-m}$ the table for $k=4$ is:

  \begin{center}
\begin{adjustbox}{max width=\textwidth}
\newcolumntype{b}{>{\columncolor{Gray}}c}
\newcolumntype{r}{>{\columncolor{white}}c}
\begin{tabular}{|c||b|b|b|b|b||r|r|r|r|}
  \hline
  $k=4$  & $x_1^{n-1}$ & {$x_1^{n-1-r}$} & {$x_1^{n-1-2r}$} & {$x_1^{n-1-3r}$} & $x_1^{n-1-4r}$ & $x_1^{r-1}$ & $x_1^{2r-1}$ & $x_1^{3r-1}$ & $x_1^{4r-1}$ \\ \hline \hline
  \cellcolor{Gray}{$x_1^{n}$} & $c_0r$ & 0 & 0 & 0 & 0 & $-c_0r$ & $-c_0r$ & $-c_0r$ & $-c_0r$  \\ \hline
  \cellcolor{Gray}{$x_1^{n-r}$} & 0 & \resizebox{1.5cm}{!}{$a_1(c_0r-r)$} & 0 & 0 & 0 & 0 & $-a_1c_0r$ & $-a_1c_0r$ & 0  \\ \hline
  \cellcolor{Gray}{$x_1^{n-2r}$} & 0 & 0 & \resizebox{1.5cm}{!}{$a_2(c_0r-2r)$} & 0 & 0 & 0 & 0 & 0 & 0  \\ \hline
  \cellcolor{Gray}$x_1^{n-3r}$ & 0 & 0 & 0 & \resizebox{1.5cm}{!}{$a_3(c_0r-3r)$} & 0 & 0 & $a_3c_0r$ & $a_3c_0r$ & 0 \\ \hline
  \cellcolor{Gray}$x_1^{n-4r}$& 0 & 0 & 0 & 0 & \resizebox{1.5cm}{!}{$a_4(c_0r-4r)$} & $a_4c_0r$ & $a_4c_0r$ & $a_4c_0r$ & $a_4c_0r$  \\ \hline \hline
  \cellcolor{white}$x_1^{0}$& $-c_0r$ & $-c_0r$ & $-c_0r$ & $-c_0r$ & $-c_0r$ & 0 & 0 & 0 & 0  \\ \hline
  \cellcolor{white}$x_1^{r}$ & 0 & $-a_1c_0r$ & $-a_1c_0r$ & $-a_1c_0r$ & 0 & $-a_1r$ & 0 & 0 & 0  \\ \hline
  \cellcolor{white}$x_1^{2r}$ & 0 & 0 & $-a_2c_0r$ & 0 & 0 & 0 & $-a_22r$ & 0 & 0  \\ \hline
  \cellcolor{white}$x_1^{3r}$ & 0 & 0 & $a_3c_0r$ & 0 & 0 & 0 & 0 & $-a_33r$ & 0  \\ \hline
  \cellcolor{white}$x_1^{4r}$ & 0 & $a_4c_0r$ & $a_4c_0r$ & $a_4c_0r$ & 0 & 0 & 0 & 0 & $-a_44r$  \\
  \hline
\end{tabular}
\end{adjustbox}
\end{center}

We prove that the columns add up zero. We can see that the first column adds up zero. The sum of the other columns of the gray part
 is exactly the same sum of the columns of the white part. Therefore  we prove that the white columns add up zero. To prove this fact for a generic table, we proceed by induction. Firstly the last column says that $-c_0r+a_kc_0r-a_kkr=0$. This implies that $a_k=\frac{c_0}{c_0-k}$.
   We need to prove that the formulas work for $l=1$. For this we need to look at the first column of the white part, which says that $-c_0r+a_kc_0r-a_1r=0$.
   Replacing the $a_k$-term we get that $-c_0r+\frac{c_0}{c_0-k}c_0r-a_1r=0$, which implies that $a_1=\frac{c_0k}{c_0-k}$
   and it coincides with the formulas. The next step is to prove that the $k-1$ column of the white part adds up zero. This column says that $$-c_0r-a_1c_0r+a_{k-1}c_0r+a_kc_0r-a_{k-1}(k-1)r=0$$  and if we replace $a_k$ and $a_1$ we get $$-c_0r-c_0r\frac{c_0k}{c_0-k}+a_{k-1}c_0r+\frac{c_0}{c_0-k}c_0r-a_{k-1}(k-1)r=0.$$ This implies that $a_{k-1}=\frac{c_0(c_0-1)k}{(c_0-k)(c_0-(k-1))}.$ This proves the case when $l=1$.

   Assuming that the formula works for $n$, the corresponding sum of the column to compute $a_{n+1}$ is $$-c_0r-\sum_{j=1}^{n}c_0ra_j+\sum_{j=0}^{n}c_0ra_{k-j}-a_{n+1}(n+1)r=0.$$
    This implies that $$\begin{array}{cl}
                                a_{n+1}(n+1)r & =\displaystyle-c_0r-\sum_{j=1}^{n}c_0ra_j+\sum_{j=0}^{n}c_0ra_{k-j} \\
                                 & =\displaystyle-c_0r\left(1+\sum_{j=1}^n(a_j-a_{k-j})-a_k\right) \\
                                 & =\displaystyle-c_0r\left(1-\frac{c_0}{c_0-k}+\sum_{j=1}^na_j\frac{2j-k}{c_0-(k-j)}\right).
                              \end{array}.  $$
If we prove that $$-c_0\sum_{j=1}^na_j\frac{2j-k}{c_0-(k-j)}=a_{n+1}(n+1)+c_0-\frac{c_0^2}{c_0-k}$$ we have proven the formula.
 We will prove this last claim by induction. If $n=1$ we have $$-c_0a_1\frac{2-k}{c_0-(k-1)}=2a_{2}+c_0-\frac{c_0^2}{c_0-k}.$$
  We can see that this is true by replacing $a_1=\frac{c_0k}{c_0-1}$ and $a_2=\frac{1}{2}\frac{c_0(c_0-1)k(k-1)}{(c_0-k)(c_0-(k-1))}$.
   For the induction step we assume that $$-c_0\sum_{j=1}^na_j\frac{2j-k}{c_0-(k-j)}=a_{n+1}(n+1)+c_0-\frac{c_0^2}{c_0-k}$$ is true and we need to prove that
   $$-c_0\sum_{j=1}^{n+1}a_j\frac{2j-k}{c_0-(k-j)}=a_{n+2}(n+2)+c_0-\frac{c_0^2}{c_0-k}$$ is also true. Starting with $$-c_0\sum_{j=1}^{n+1}a_j\frac{2j-k}{c_0-(k-j)}=
   -c_0\sum_{j=1}^{n}a_j\frac{2j-k}{c_0-(k-j)}-c_0a_{n+1}\frac{2(n+1)-k}{c_0-(k-(n+1))}$$ and considering the induction hypothesis we get
    $$-c_0\sum_{j=1}^{n+1}a_j\frac{2j-k}{c_0-(k-j)}=a_{n+1}(n+1)+c_0-\frac{c_0^2}{c_0-k}-c_0a_{n+1}\frac{2(n+1)-k}{c_0-(k-(n+1))}.$$
     This last equation is true because  $$a_{n+1}(n+1)-c_0a_{n+1}\frac{2(n+1)-k}{c_0-(k-(n+1))}=
      a_{n+1}\frac{(c_0-(n+1))(k-(n+1))}{c_0-(k-(n+1)}=(n+2)a_{n+2}$$  (the las equality is by the definition of $a_{n+1}$
      comparing with $a_{n+2}$) and the proof is complete.

      For the polynomials $(2.b)$ the tabes involve the same system to solve.
  \item For the polynomials $(3.a)$ and $(3.b)$ the proof is the same as before. We only need to change $c_0$ into $-c_0$.
\end{enumerate}

\end{proof}

We give an example to show how compute the polynomial $(1.a)$.

\begin{example}
Suppose that we have the following data:
\begin{multicols}{2}
\begin{itemize}
  \item $r=4$
  \item $d_0=13$
  \item $d_1=-13$
  \item $d_2=0$
  \item $d_3=0$
  \item $c_0=-3$
\end{itemize}
\end{multicols}

        If we consider $n=13$ for $\lambda_{0,1}$ we have that $$13-d_0+d_{0-13}=13-13-0=0$$ and $$12<13+1-0<16$$ thus $k=3$. This applies to the polynomials $(1.a)$. The polynomial annihilated is:
        $$p(x_1,x_2)=\left(x_1^{13}+b_1x_1^9x_2^4+b_2x_1^5x_2^8\right)\otimes v_{T_1}+(a_1x_1^{10}x_2^3+a_2x_1^6x_2^7+a_3x_1^{2}x_2^{11})\otimes v_{T_2}.$$
        We need to compute the coefficients. In this case
        $$\begin{array}{ccc}
            s_1=23, & s_2=19, & s_3=15
          \end{array}.
        $$
        $s_1a_1=c_0r$ implies that $$a_1=\displaystyle-\frac{12}{23}.$$ Using the second part of the system that says $s_1a_1=s_3a_3$, we have that $$a_3=\displaystyle-\frac{4}{5}.$$ If we compute $b_1$ using the last part of the system,  $b_1=c_0\left(\frac{2r}{s_3}\right)a_1$ and this implies that $$b_1=\displaystyle\frac{96}{115}.$$ Using part three, we have that $b_1=2b_2$. And this implies $$b_2=\displaystyle \frac{48}{115}.$$
        We finish computing $a_2$. $$a_2=\frac{c_0r}{s_2}(b_2+1)=-\frac{1956}{2185}$$ this way we compute all the coefficients. The polynomial is:
        $$\displaystyle p(x_1,x_2)=\left(x_1^{13}+\frac{96}{115}x_1^9x_2^4+\frac{48}{115}x_1^5x_2^8\right)\otimes v_{T_1}-\left(\frac{12}{23}x_1^{10}x_2^3+
        \frac{1956}{2185}x_1^6x_2^7+\frac{4}{5}x_1^{2}x_2^{11}\right)\otimes v_{T_2}.$$
\end{example}

\section{Main theorems}

  If we have an $r$-partition $\lambda=(\lambda^0,\lambda^1,...,\lambda^{r-1})$, define the \textit{content} of a box $b \in \lambda^{i}$ by $j-k$, if $b$ is in the $k$ row and in the $j$ column from $\lambda^{i}$. We write it $ct(b)$ = \textit{content} of $b$. If $T$ is a standard Young tableau associated to $\lambda$ , let be $T(i)$ for the box $b$ of $\lambda$, in which $i$ appears. And define the function $\beta$ over the set of all boxes of $\lambda$ as follows:
\begin{center}
    $\beta(b)$ = $i$ if $b \in \lambda^{i}.$
\end{center}
We also define the \emph{charged content} $c(b)$ of a box $b$ of $\lambda$ by the equation
\begin{align*}
c(b)=ct(b)rc_0+d_{\beta(b)}.
\end{align*}

Now we are able to enunciate theorem 5.1 of \cite{GGAD}.
\begin{theorem} \label{teoag}
 If there is a non-zero morphism $\Delta(\lambda)\rightarrow \Delta(\mu)$ , then there are $T \in SYT(\lambda)$ and $U \in SYT(\mu)$ with
 $$\begin{array}{ccc}
     c(U(i))-c(T(i)) \in \mathbb{Z}_{\geq 0} & $and$ & c(U(i))-c(T(i))=\beta(U(i))-\beta(T(i))\mod r
   \end{array}.
 $$
 \end{theorem}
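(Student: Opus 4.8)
The plan is to prove the statement by exploiting the large commutative subalgebra of $\HH$ of Cherednik (Dunkl--Opdam) operators together with the grading coming from the diagonal cyclic subgroup, and to match spectral data across the morphism. For $W=G(r,1,n)$ there is a commuting family $z_1,\dots,z_n\in\HH$ which commutes with the diagonal subgroup $\mu_r^n=\langle\zeta_1,\dots,\zeta_n\rangle$. The key structural input, which for $n=2$ is read off from the explicit formulas in Propositions~\ref{actn1}--\ref{actn3} and in general is Griffeth's theory of nonsymmetric Jack polynomials for $G(r,1,n)$ (cf.\ \cite{SG1}), is that on each standard module $\Delta(\mu)\cong\CC[x_1,\dots,x_n]\otimes S^\mu$ the operators $z_i$ act triangularly with respect to a single order $\prec$ on the monomial basis $\{x^a\otimes v_U\}$, with diagonal data as follows: the diagonal entry of $z_i$ on the basis vector with leading term $x^a\otimes v_U$ equals $c(U(i))-a_i$, while $\zeta_i$ acts on it by the scalar $\zeta^{\,\beta(U(i))-a_i}$. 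In particular each lowest weight $v_U$ (the case $a=0$) is an honest joint eigenvector with $z_i$-eigenvalue $c(U(i))$ and $\zeta_i$-weight $\zeta^{\beta(U(i))}$.

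With this in hand the argument is formal. Let $\phi\colon\Delta(\lambda)\to\Delta(\mu)$ be nonzero. Since $\Delta(\lambda)$ is generated by its lowest weight space $S^\lambda$, the map $\phi$ cannot vanish on $S^\lambda$, so $\phi(v_T)\neq 0$ for some $T\in SYT(\lambda)$. Because $z_1,\dots,z_n\in\HH$ and $\zeta_1,\dots,\zeta_n\in W$, the map $\phi$ commutes with all of them; hence $w:=\phi(v_T)$ is again an honest joint $z$-eigenvector, with eigenvalue $(c(T(1)),\dots,c(T(n)))$, and an honest $\mu_r^n$-weight vector of weight $(\beta(T(1)),\dots,\beta(T(n)))\bmod r$. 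Expand $w=\sum c_{a,U}\,x^a\otimes v_U$ and let $x^{a}\otimes v_U$ be the $\prec$-minimal basis vector occurring in this expansion. Applying each $z_i$ and comparing the coefficient of this minimal vector, which alone contributes by triangularity, forces its diagonal entry to equal the eigenvalue, that is $c(U(i))-a_i=c(T(i))$ for every $i$. Comparing $\zeta_i$-weights, which are exact since $\mu_r^n$ acts semisimply, forces $\beta(U(i))-a_i\equiv\beta(T(i))\bmod r$.

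Combining the two relations gives precisely the assertion. From $c(U(i))-a_i=c(T(i))$ we obtain $c(U(i))-c(T(i))=a_i\in\ZZ_{\geq 0}$: the exponent $a_i$ of a monomial is a nonnegative integer, and the eigenvalue identity forces the a priori non-integral quantity $c(U(i))-c(T(i))$ to equal it. Substituting $a_i$ into the weight congruence yields $c(U(i))-c(T(i))\equiv\beta(U(i))-\beta(T(i))\bmod r$. The tableau $U$ produced this way lies in $SYT(\mu)$, which completes the proof.

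I expect the genuine work to be entirely in the structural input of the first paragraph: producing the commuting family $z_i$, proving that it commutes with $\mu_r^n$ and acts triangularly on the monomial basis with a \emph{common} order, and identifying the diagonal entries as the degree-shifted charged contents $c(U(i))-a_i$ together with the matching $\mu_r^n$-weights $\zeta^{\beta(U(i))-a_i}$. This is exactly the point at which the explicit commutation relations of Proposition~\ref{propact} and their specializations in Propositions~\ref{actn1}--\ref{actn3} are needed, and where the bookkeeping relating a unit increase of the degree in $x_i$ to a simultaneous unit decrease of both the spectral value and of the component index mod $r$ must be verified; the subsequent eigenvalue and weight matching is then purely formal.
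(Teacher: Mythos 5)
The paper does not actually prove this statement: it is imported verbatim as Theorem 5.1 of \cite{GGAD}, so there is no internal proof to compare against. Your argument is, in outline, exactly the proof given in \cite{GGAD}: the Dunkl--Opdam commutative subalgebra generated by the $z_i$ and the diagonal group $\mu_r^n$ acts triangularly on $\Delta(\mu)$ with computable diagonal data, the image $\phi(v_T)$ of a lowest-weight eigenvector is again an honest joint eigenvector, and reading off the $\prec$-extremal term of its expansion forces the eigenvalue and $\mu_r^n$-weight matching that yields the two displayed conditions. The one caveat is in your structural input: the diagonal entry of $z_i$ on the basis vector indexed by $(a,U)$ is not literally $c(U(i))-a_i$ but involves the entry $U(w_a(i))$ for a permutation $w_a$ determined by $a$ (together with a $c_0$-correction counting inversions), so the bookkeeping that converts ``diagonal entry $=$ eigenvalue'' into the statement for a \emph{single} pair $(T,U)$ and the \emph{same} index $i$ on both sides requires that correction; this is precisely the content of the eigenvalue formulas in \cite{SG1} that you defer to, and once they are quoted correctly the formal matching goes through as you describe.
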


 We use this theorem to prove the necessary conditions for the existence of morphisms between standard modules and for the sufficient conditions we use the singular polynomials described in Propositions \ref{sing1}, Proposition \ref{sing2} and Proposition\ref{sing3}.

\begin{theorem} \label{th1}
The necessary and sufficient conditions for the existence of a morphism between standard modules for $G(r,1,2)$ are shown in the followings tables:

\begin{center}
\begin{tabular}{|c||c|c|c|c|c|c|}
\hline
    \hfill & $\Delta(\lambda_i)$ & $\Delta(\lambda_j)$ & $\Delta(\lambda^i)$ & $\Delta(\lambda^j)$ & $\Delta(\lambda_{i,j})$ & $\Delta(\lambda_{j,k})$ \\ \hline \hline
    $\Delta(\lambda_i)$ & $\cdot$ & $d_j-d_i$ & $c_0=-\frac{k}{2}$ & $\begin{array}{c}
                                                             d_j-d_i \\
                                                             c_0=-\frac{k}{2}
                                                           \end{array}$
     & $d_j-d_i-c_0r$ & $\begin{array}{c}
                        d_j-d_i \\
                        d_k-d_j-c_0r
                      \end{array}$
      \\ \hline
      $\Delta(\lambda^i)$ & $c_0=\frac{k}{2}$ & $\begin{array}{c}
                                    d_j-d_i \\
                                    c_0=\frac{k}{2}
                                  \end{array}$
     & $\cdot$ & $d_j-d_i$ & $d_j-d_i+c_0r$ & $\begin{array}{c}
                                                d_j-d_i \\
                                                d_k-d_j+c_0r
                                              \end{array}$\\
    \hline
  \end{tabular}
\end{center}
\begin{center}
\begin{adjustbox}{max width=\textwidth}
\begin{tabular}{|c||c|c|c|c|c|c|c|}
  \hline
  \hfill & $\Delta(\lambda_i)$ & $\Delta(\lambda^i)$ & $\Delta(\lambda_k)$ & $\Delta(\lambda^k)$ & $\Delta(\lambda_{i,j})$ & $\Delta(\lambda_{i,k})$ & $\Delta(\lambda_{k,s})$ \\ \hline \hline
  $\Delta(\lambda_{i,j})$ & $d_i-d_j+c_0r$ & $d_i-d_j-c_0r$ & $\begin{array}{c}
                                                       d_k-d_i \\
                                                       d_k-d_j+c_0r
                                                     \end{array}
  $ & $\begin{array}{c}
         d_k-d_i \\
         d_k-d_j-c_0r
       \end{array}
  $ & $\cdot$ & $d_k-d_j$ & $\begin{array}{c}
                                     d_k-d_i \\
                                     d_s-d_j \\
                                       $or$  \\
                                     d_s-d_i \\
                                     d_k-d_j
        \end{array}
  $ \\
  \hline
\end{tabular}
\end{adjustbox}
\end{center}

Columns represent the domain, rows represent the codomain and the entries represent conditions on the parameters. If more than one condition appears, this means that both must hold. The dots mean that there is no condition. The condition $d_i-d_j$ means that $d_i-d_j \in \mathbb{Z}_{\geq 0}$ and $d_i-d_j=i-j \mod r$. The condition $d_i-d_j\pm c_0r$ means $d_i-d_j\pm c_0r \in \mathbb{Z}_{\geq 0}$ and $d_i-d_j\pm c_0r=i-j \mod r$. Finally the conditions $c_0=\pm\frac{k}{2}$ say that $k$ is a positive odd integer.
\end{theorem}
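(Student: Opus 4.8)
The plan is to combine the two halves announced in the introduction via the \emph{Hom--Sing} bijection, which for $M=\Delta(\mu)$ and $V=S^{\lambda}$ reads
$$\mathrm{Hom}_{\HH}(\Delta(\lambda),\Delta(\mu))\xrightarrow{\sim}\mathrm{Hom}_{\CC W}(S^{\lambda},\mathrm{Sing}(\Delta(\mu))).$$
This reduces the existence and the dimension count of a morphism $\Delta(\lambda)\to\Delta(\mu)$ to whether the space of singular polynomials in $\Delta(\mu)$ contains a copy of the $\CC W$-module $S^{\lambda}$. For each ordered pair $(\lambda,\mu)$ drawn from the three families $\lambda_i,\lambda^i,\lambda_{i,j}$ I would establish necessity of the tabulated condition from Theorem \ref{teoag}, and sufficiency by producing the relevant polynomial of Propositions \ref{sing1}, \ref{sing2} and \ref{sing3} and checking that it spans, alone or together with its $W$-translate, a copy of $S^{\lambda}$.

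For the necessity I would first record the charged contents of every box in every standard tableau. For $\lambda_i$ the two boxes have contents $0,1$, hence charged contents $d_i$ and $rc_0+d_i$; for $\lambda^i$ they are $0,-1$, giving $d_i$ and $-rc_0+d_i$; and for the two tableaux $T_1,T_2$ of $\lambda_{i,j}$ both boxes have content $0$, giving the pairs $(d_i,d_j)$ and $(d_j,d_i)$. Feeding these into the inequalities $c(U(m))-c(T(m))\in\ZZ_{\geq 0}$ and the congruences $c(U(m))-c(T(m))\equiv\beta(U(m))-\beta(T(m))\pmod r$ of Theorem \ref{teoag}, and ranging over the one or two tableaux available on each side, yields exactly the conditions $d_a-d_b$, $d_a-d_b\pm c_0r$, and $c_0=\pm\tfrac{k}{2}$ of the tables: the difference of the $rc_0$-terms between paired boxes forces the $\pm c_0r$ shifts, while insisting that the \emph{second} box also satisfy the inequality is what discards the spurious cases.

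For the sufficiency I would match each table entry to one of the polynomials of Propositions \ref{sing1}--\ref{sing3} and compute its image under $\zeta_1,\zeta_2$ and the swap $I_2^t$. For instance $(x_1^r-x_2^r)^k\otimes v_t$ in $\Delta(\lambda_i)$ has $\zeta_1,\zeta_2$-eigenvalue $\zeta^i$ and, since $k$ is odd, is negated by the swap, so it transforms as $S^{\lambda^i}$ and supplies the morphism governed by $c_0=\tfrac{k}{2}$; the analogous polynomial in $\Delta(\lambda^i)$ accounts for $c_0=-\tfrac{k}{2}$. The diagonal polynomials $x_1^nx_2^n\otimes v_t$ and the polynomials $p(x_1,x_2)$ of Propositions \ref{sing1} and \ref{sing2} give the $d_a-d_b$ and $d_a-d_b\pm c_0r$ entries between the $\lambda_i$ and $\lambda^i$ families, while the symmetric and antisymmetric vector-valued polynomials of Proposition \ref{sing3} realize $S^{\lambda_k},S^{\lambda^k}$ and $S^{\lambda_{i,k}}$ inside $\Delta(\lambda_{i,j})$. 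In each case the existence condition attached to the polynomial, after rewriting the $n-d_a+d_{a-n}=0$ type identities in the congruence normal form $d_a-d_b\equiv a-b\pmod r$ with $d_a-d_b\in\ZZ_{\geq 0}$, is literally the tabulated condition, so necessity and sufficiency meet cell by cell.

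The main obstacle I anticipate is not any single computation but the global bookkeeping that the two sides agree everywhere: I must verify that whenever Theorem \ref{teoag} admits a compatible pair $(T,U)$, a genuine singular polynomial of the correct $W$-type actually exists, so that no cell is assigned a condition strictly weaker than the truth. This is most delicate for the $\lambda_{i,j}$ codomain, where two distinct singular polynomials can carry the same representation and force $\mathrm{Dim}(\mathrm{Hom})=2$ (the ``or'' entry, consistent with the dimension count stated before Theorem \ref{th1}), and at the degenerate parameter values where a denominator in the coefficients $\beta_l$ or $a_l$ vanishes and the polynomial must be replaced by its $(c_0\mp m)$-multiple; there one has to confirm that the rescaled polynomial is still nonzero and still singular, so that the morphism persists precisely on the closed locus described by the table.
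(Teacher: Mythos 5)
Your overall strategy coincides with the paper's: necessity from Theorem \ref{teoag} via charged contents, sufficiency from the explicit singular polynomials of Propositions \ref{sing1}, \ref{sing2} and \ref{sing3} through the Hom--Sing bijection. However, there are two concrete gaps.

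First, for the cells $c_0=\pm\frac{k}{2}$ (morphisms between $\Delta(\lambda_i)$ and $\Delta(\lambda^j)$), Theorem \ref{teoag} applied to the charged contents you list only yields that $\pm 2rc_0\in\ZZ_{\geq 0}$ with $\pm 2rc_0\equiv 0 \bmod r$, i.e.\ that $2c_0$ is an integer of the appropriate sign; it says nothing about the parity of $k=|2c_0|$. The oddness of $k$ is part of the asserted necessary condition and cannot be read off from the charged-content inequalities and congruences. The paper closes this by invoking Theorem 1.2 of \cite{GGAD} with the parabolic subgroup $G_{S}=G(1,1,2)$, which produces a nonzero morphism $\Delta_{c_0}(\mathrm{sign})\rightarrow\Delta_{c_0}(\mathrm{triv})$ for the rational Cherednik algebra of $G(1,1,2)$ and hence forces $k$ odd. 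Your proposal omits this step, so necessity of the tabulated condition is not established in those cells.

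Second, your sufficiency plan of matching each table entry to a single polynomial from Propositions \ref{sing1}--\ref{sing3} only works for the one-condition cells. The seven two-condition cells (for instance $\Delta(\lambda_i)\rightarrow\Delta(\lambda^j)$, $\Delta(\lambda_i)\rightarrow\Delta(\lambda_{j,k})$, $\Delta(\lambda_{i,j})\rightarrow\Delta(\lambda_k)$, $\Delta(\lambda_{i,j})\rightarrow\Delta(\lambda_{k,s})$) are not realized by any listed singular polynomial: Proposition \ref{sing3} gives no copy of $S^{\lambda_i}$ inside $\mathrm{Sing}(\Delta(\lambda_{j,k}))$ when $i\neq j,k$, and Propositions \ref{sing1} and \ref{sing2} give no copy of $S^{\lambda_i}$ inside $\mathrm{Sing}(\Delta(\lambda^{j}))$ for $i\neq j$. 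The paper instead composes two morphisms, one per condition, and must then verify that the composite is nonzero; this is immediate when the composite sends $1\otimes v_T$ to $pq\otimes v_T$ with $p,q$ nonzero polynomials, but in the cases with a two-dimensional intermediate or target fiber one must check that combinations such as $pr+qr^{\prime}$ (with $r^{\prime}$ the $x_1\leftrightarrow x_2$ swap of $r$) do not cancel, which requires inspecting the coefficients. Without either this composition device together with the nonvanishing check, or a direct construction of the corresponding singular vectors, those cells of the table remain unproved.
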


\begin{proof}
For the necessary conditions we use Theorem \ref{teoag} attached to our $r$-partitions. This theorem give us almost all the conditions. Nevertheless in the cases of $\lambda_i\rightarrow \lambda^j$ and $\lambda^i\rightarrow \lambda_j$ the theorem shows in the first case that $c_0=-\frac{k}{2}$ and in the second case that $c_0=\frac{k}{2}$, without the condition that $k$ is odd. To get this condition we apply Theorem 1.2 of \cite{GGAD} with $G_{S}=G(1,1,2)$ to obtain a nonzero morphisms $\Delta_{c_0}(\mathrm{sign})\rightarrow \Delta_{c_0}(\mathrm{triv})$ for the rational Cherednik algebra for $G(1,1,2)$. This implies that $c_0=\frac{k}{2}$ with the condition that $k$ is odd.

To prove that this conditions are sufficient we construct explicit homomorphisms using the singular polynomials described in Propositions \ref{sing1}, Proposition \ref{sing2} and Proposition \ref{sing3}. We start with the cases that only have one condition.
\begin{enumerate}[1)]
  \item $\Delta(\lambda_i)\rightarrow \Delta(\lambda_j)$.\\
        In this case the condition is $d_j-d_i$. If we use $n=d_j-d_i$, we have the condition $(b)$ of Proposition \ref{sing1}. In this case the morphism is given by sending $1\otimes v_T\rightarrow x_1^nx_2^n\otimes v_T$.
  \item $\Delta(\lambda_i)\rightarrow \Delta(\lambda^i)$.\\
        In this case the condition is $c_0=-\frac{k}{2}$. We have the condition $(a)$ of Proposition \ref{sing2}. In this case the morphism is given by sending $1\otimes v_T\rightarrow (x_1^r-x_2^r)^k\otimes v_T$.
  \item $\Delta(\lambda_i)\rightarrow \Delta(\lambda_{i,j})$.\\
        In this case the condition is $d_j-d_i-c_0r$. We have two options:
         \begin{enumerate}
         \item $i<j$. If we use $n=d_j-d_i-c_0r$, we have the condition $(3.b)$ of Proposition \ref{sing3}. In this case the morphisms is given by sending $1\otimes v_T\rightarrow p(x_1,x_2)$ where $p(x_1,x_2)$ is the singular polynomials of case $(3.b)$ of Proposition \ref{sing3}.
         \item $i>j$. If we use $n=d_j-d_i-c_0r$, we have the condition $(3.a)$ of Proposition \ref{sing3}. In this case the morphisms is given by sending $1\otimes v_T\rightarrow p(x_1,x_2)$ where $p(x_1,x_2)$ is the singular polynomials of case $(3.a)$ of Proposition \ref{sing3}.
         \end{enumerate}
  \item $\Delta(\lambda^i)\rightarrow \Delta(\lambda_i)$.\\
        In this case the condition is $c_0=\frac{k}{2}$. We have the condition $(a)$ of Proposition \ref{sing1}. In this case the morphism is given by sending $1\otimes v_T\rightarrow (x_1^r-x_2^r)^k\otimes v_T$.
  \item $\Delta(\lambda^i)\rightarrow \Delta(\lambda^j)$.\\
        In this case the condition is $d_j-d_i$. If we use $n=d_j-d_i$, we have the condition $(b)$ of Proposition \ref{sing2}. In this case the morphism is given by sending $1\otimes v_T\rightarrow x_1^nx_2^n\otimes v_T$.
  \item $\Delta(\lambda^i)\rightarrow \Delta(\lambda_{i,j})$.\\
        In this case the condition is $d_j-d_i+c_0r$. We have two options:
         \begin{enumerate}
         \item $i<j$. If we use $n=d_j-d_i+c_0r$, we have the condition $(2.b)$ of Proposition \ref{sing3}. In this case the morphisms is given by sending $1\otimes v_T\rightarrow p(x_1,x_2)$ where $p(x_1,x_2)$ is the singular polynomials of case $(2.b)$ of Proposition \ref{sing3}.
         \item $i>j$. If we use $n=d_j-d_i-c_0r$, we have the condition $(2.a)$ of Proposition \ref{sing3}. In this case the morphisms is given by sending $1\otimes v_T\rightarrow p(x_1,x_2)$ where $p(x_1,x_2)$ is the singular polynomials of case $(2.a)$ of Proposition \ref{sing3}.
         \end{enumerate}
  \item $\Delta(\lambda_{i,j})\rightarrow \Delta(\lambda_i)$.\\
        In this case the condition is $d_i-d_j+c_0r$. If we use $n=d_i-d_j+c_0r$ we are in case $(c)$ of Proposition \ref{sing1}. We have two options:
        \begin{enumerate}
        \item $i<j$. The morphisms is given by sending $1\otimes v_{T_2}\rightarrow p(x_1,x_2)\otimes v_T$ where $p(x_1,x_2)$ is the singular polynomials of case $(c)$ of Proposition \ref{sing1}.
        \item $i>j$. The morphisms is given by sending $1\otimes v_{T_1}\rightarrow p(x_1,x_2)\otimes v_T$ where $p(x_1,x_2)$ is the singular polynomials of case $(c)$ of Proposition \ref{sing1}.
        \end{enumerate}
  \item $\Delta(\lambda_{i,j})\rightarrow \Delta(\lambda^i)$.\\
        In this case the condition is $d_i-d_j-c_0r$. If we use $n=d_i-d_j-c_0r$ we are in case $(c)$ of Proposition \ref{sing2}. We have two options:
        \begin{enumerate}
        \item $i<j$. The morphisms is given by sending $1\otimes v_{T_2}\rightarrow p(x_1,x_2)\otimes v_T$ where $p(x_1,x_2)$ is the singular polynomials of case $(c)$ of Proposition \ref{sing2}.
        \item $i>j$. The morphisms is given by sending $1\otimes v_{T_1}\rightarrow p(x_1,x_2)\otimes v_T$ where $p(x_1,x_2)$ is the singular polynomials of case $(c)$ of Proposition \ref{sing2}.
        \end{enumerate}
  \item $\Delta(\lambda_{i,j})\rightarrow \Delta(\lambda_{i,k})$.\\
        In this case the condition is $d_k-d_j$. We have four options:
        \begin{enumerate}
          \item $i<j$ and $i<k$. If we use $n=d_k-d_j$, we are in case $(1.b)$ of Proposition \ref{sing3}. In this case the morphisms is given by sending $1\otimes v_{T_1}\rightarrow p(x_1,x_2)$ where $p(x_1,x_2)$ is the singular polynomials of case $(1.b)$ of Proposition \ref{sing3}.
          \item $i<j$ and $i>k$. If we use $n=d_k-d_j$, we are in case $(1.a)$ of Proposition \ref{sing3}. In this case the morphisms is given by sending $1\otimes v_{T_2}\rightarrow p(x_1,x_2)$ where $p(x_1,x_2)$ is the singular polynomials of case $(1.a)$ of Proposition \ref{sing3}.
          \item $i>j$ and $i<k$. If we use $n=d_k-d_j$, we are in case $(1.b)$ of Proposition \ref{sing3}. In this case the morphisms is given by sending $1\otimes v_{T_2}\rightarrow p(x_1,x_2)$ where $p(x_1,x_2)$ is the singular polynomials of case $(1.b)$ of Proposition \ref{sing3}.
          \item $i>j$ and $i>k$. If we use $n=d_k-d_j$, we are in case $(1.a)$ of Proposition \ref{sing3}. In this case the morphisms is given by sending $1\otimes v_{T_1}\rightarrow p(x_1,x_2)$ where $p(x_1,x_2)$ is the singular polynomials of case $(1.a)$ of Proposition \ref{sing3}.
        \end{enumerate}
\end{enumerate}
We continue with the cases that have two conditions. There are seven cases with two conditions:
\begin{enumerate}
  \item $\Delta(\lambda_i)\rightarrow \Delta(\lambda^j)$ or $\Delta(\lambda^i)\rightarrow \Delta(\lambda_j)$.\\
   For $\Delta(\lambda_i)\rightarrow \Delta(\lambda^j)$ we have the conditions $d_j-d_i$ and $c_0=-\frac{k}{2}$. The condition $c_0=-\frac{k}{2}$ allows the construction of the morphism $\Delta(\lambda_i)\rightarrow \Delta(\lambda^i)$. The condition $d_j-d_i$ allows the construct of the morphism $\Delta(\lambda^i)\rightarrow \Delta(\lambda^j)$. The composition of these two morphisms is a morphism from $\Delta(\lambda_i)$ to  $\Delta(\lambda^j)$. This is a non-zero composition, because it is of the form $1\otimes v_T \rightsquigarrow pq\otimes v_T$ where $p$ and $q$ are non-zero polynomials. For $\Delta(\lambda^i)\rightarrow \Delta(\lambda_j)$ we use the same arguments as before attached to this case.

  \item $\Delta(\lambda_i)\rightarrow \Delta(\lambda_{j,k})$ or $\Delta(\lambda^i)\rightarrow \Delta(\lambda_{j,k})$.\\
   For $\Delta(\lambda_i)\rightarrow \Delta(\lambda_{j,k})$ we have the conditions $d_j-d_i$ and $d_k-d_j-c_0r$. The condition $d_j-d_i$ allows the construction of the morphism $\Delta(\lambda_i)\rightarrow \Delta(\lambda_j)$. The condition $d_k-d_j-c_0r$ allow the construction of the morphism $\Delta(\lambda_j)\rightarrow \Delta(\lambda_{j,k})$. The composition of these two morphisms is a morphism from $\Delta(\lambda_i)$ to $\Delta(\lambda_{j,k})$. This is a non-zero composition, because it is of the form $1\otimes v_T \rightsquigarrow pq\otimes v_{T_1}+pr\otimes v_{T_2}$ where $p,q,r$ are non-zero polynomials. For $\Delta(\lambda^i)\rightarrow \Delta(\lambda_{j,k})$ we use the same arguments as before attached to this case.

  \item $\Delta(\lambda_{i,j})\rightarrow \Delta(\lambda_k)$ or $\Delta(\lambda_{i,j})\rightarrow \Delta(\lambda^k)$.\\
  For $\Delta(\lambda_{i,j})\rightarrow \Delta(\lambda_k)$ we have the conditions $d_k-d_i$ and $d_k-d_i+c_0r$. The condition $d_k-d_i$ allows the construction of the morphism $\Delta(\lambda_{i,j})\rightarrow \Delta(\lambda_{j,k})$. The condition $d_k-d_i+c_0r$ allow the construction of the morphism $\Delta(\lambda_{j,k})\rightarrow \Delta(\lambda_k)$. The composition of these two morphisms is a morphism from $\Delta(\lambda_{i,j})$ to $\Delta(\lambda_k)$. This composition is of the form $1\otimes v_{T_1} \rightsquigarrow (pr+qr^\prime) \otimes v_{T}$ where $r^\prime$ is just interchanging $x_1$ and $x_2$ in $r$. Looking at the coefficients of the polynomials involved we can see that $(pr+qr^\prime)$ is a non-zero polynomial . For $\Delta(\lambda_{i,j})\rightarrow \Delta(\lambda^k)$ we use the same arguments as before attached to this case.

  \item $\Delta(\lambda_{i,j})\rightarrow \Delta(\lambda_{k,s})$.\\
   For this case we have the conditions $d_k-d_i$ and $d_s-d_j$ (or $d_s-d_i$ and $d_k-d_j$). The condition $d_k-d_i$ allows the construction of the morphism  $\Delta(\lambda_{i,j})\rightarrow \Delta(\lambda_{k,j})$. The condition $d_s-d_j$ allows the construction of the morphisms $\Delta(\lambda_{k,j})\rightarrow \Delta(\lambda_{k,s})$. The composition of these two morphisms is a morphism from $\Delta(\lambda_{i,j})$ to $\Delta(\lambda_{k,s})$. This composition is of the form $1\otimes v_{T_1} \rightsquigarrow (pr+qr^\prime) \otimes v_{T_1}+(ps+qs^\prime)\otimes v_{T_2}$ where $r^\prime$ and $s^\prime$ is just interchanging $x_1$ and $x_2$ in $r$ and $s$. Looking at the coefficients of the polynomials involved we can see that $(pr+qr^\prime)$ or $(ps+qs^\prime)$ is a non-zero polynomial. For the condition $d_s-d_i$ and $d_k-d_j$ we can do the same as before.
  \end{enumerate}
\end{proof}

\section{Dimension} \label{SecDim}

In this section we establish sufficient conditions to have that the dimension of the homomorphisms space between two standard modules is two. We suspect that these sufficient conditions are also necessary conditions for having a two dimensional space of morphisms of any standard module.
\begin{theorem} If we have the following conditions  \begin{itemize}
                                                            \item $d_i-d_k+c_0r\in\ZZ_{\geq 0}$ and $d_i-d_k+c_0r\equiv i-k$ mod $r$.
                                                            \item $d_i-d_k-c_0r\in\ZZ_{\geq 0}$ and $d_i-d_k-c_0r\equiv i-k$ mod $r$.
                                                            \item $d_j-d_i+c_0r\in\ZZ_{\geq 0}$ and $d_j-d_i+c_0r\equiv i-k$ mod $r$.
                                                            \item $d_j-d_i-c_0r\in\ZZ_{\geq 0}$ and $d_j-d_i-c_0r\equiv j-1$ mod $r$.
                                                          \end{itemize}
where $c_0$ is a non-zero integer, then we have that $$\mathrm{Dim}(\mathrm{Hom}_{\mathbb{H}}(\Delta(\lambda_{i,k}),\Delta(\lambda_{i,j})))=2.$$
\end{theorem}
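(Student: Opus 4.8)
The plan is to exhibit two nonzero homomorphisms and prove they are linearly independent; since, as recalled in the introduction, $\dim\mathrm{Hom}_{\HH}(\Delta(\lambda_{i,k}),\Delta(\lambda_{i,j}))\leq 2$ always, this forces the dimension to be exactly $2$. Throughout I write $\tau=I_2^t$ for the transposition exchanging the two coordinates, acting on $\Delta(\lambda_{i,j})$ by $x_1\leftrightarrow x_2$ together with $v_{T_1}\leftrightarrow v_{T_2}$, and $\tilde h(x_1,x_2)=h(x_2,x_1)$.

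First I would construct the two morphisms as composites through the one--dimensional modules $\Delta(\lambda_i)$ and $\Delta(\lambda^i)$,
\[
\phi_1\colon\Delta(\lambda_{i,k})\xrightarrow{\ \alpha_1\ }\Delta(\lambda_i)\xrightarrow{\ \beta_1\ }\Delta(\lambda_{i,j}),\qquad
\phi_2\colon\Delta(\lambda_{i,k})\xrightarrow{\ \alpha_2\ }\Delta(\lambda^i)\xrightarrow{\ \beta_2\ }\Delta(\lambda_{i,j}).
\]
The four hypotheses are precisely the four single--step conditions supplied by Theorem \ref{th1}: $d_i-d_k+c_0r\in\ZZ_{\geq0}$ yields $\alpha_1$ and $d_j-d_i-c_0r\in\ZZ_{\geq0}$ yields $\beta_1$ (via Propositions \ref{sing1} and \ref{sing3}), while $d_i-d_k-c_0r\in\ZZ_{\geq0}$ yields $\alpha_2$ and $d_j-d_i+c_0r\in\ZZ_{\geq0}$ yields $\beta_2$ (via Propositions \ref{sing2} and \ref{sing3}). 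Write $s_+=\beta_1(1\otimes v_T)$ and $s_-=\beta_2(1\otimes v_T)$ for the resulting singular vectors in $\Delta(\lambda_{i,j})$; by $W$--equivariance $s_+$ is $\tau$--symmetric of degree $d_j-d_i-c_0r$ and $s_-$ is $\tau$--antisymmetric of degree $d_j-d_i+c_0r$, and since $c_0\neq0$ these two degrees differ. Because each $\beta_t$ is $\CC[x_1,x_2]$--linear, $\phi_1$ sends $1\otimes v_{T_2}\mapsto p_1\,s_+$ and $\phi_2$ sends $1\otimes v_{T_2}\mapsto p_2\,s_-$, where $p_1,p_2$ are the nonzero singular polynomials of $\alpha_1,\alpha_2$. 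As in the proof of Theorem \ref{th1}, each image is a product of nonzero elements of the integral domain--like module $\CC[x_1,x_2]\otimes S^{\lambda_{i,j}}$, so both $\phi_1$ and $\phi_2$ are nonzero (the precise basis vector $v_{T_1}$ or $v_{T_2}$ carrying the generator depends on the relative order of $i,j,k$, but the argument is identical in each case).

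For linear independence I would exploit $\tau$--equivariance. Write $s_+=f_1\otimes v_{T_1}+\tilde f_1\otimes v_{T_2}$ and $s_-=g_1\otimes v_{T_1}-\tilde g_1\otimes v_{T_2}$, the shapes forced by $\tau s_\pm=\pm s_\pm$. Evaluating a hypothetical relation $c_1\phi_1+c_2\phi_2=0$ on $1\otimes(v_{T_1}+v_{T_2})$ gives $c_1(p_1+\tilde p_1)s_++c_2(p_2-\tilde p_2)s_-=0$. Setting $A=c_1(p_1+\tilde p_1)$ and $B=c_2(p_2-\tilde p_2)$, the $v_{T_1}$-- and $v_{T_2}$--components read $Af_1+Bg_1=0$ and $A\tilde f_1-B\tilde g_1=0$; multiplying the first by $\tilde g_1$, the second by $g_1$, and adding eliminates $B$ and yields $A\,(f_1\tilde g_1+\tilde f_1 g_1)=0$. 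Since $p_1+\tilde p_1$ has leading term $x_1^{\deg p_1}$, it is nonzero, so $A\neq0$ whenever $c_1\neq0$, and we are reduced to the single polynomial identity $f_1\tilde g_1+\tilde f_1 g_1=0$. The clean way to contradict this is to restrict to the diagonal $x_1=x_2=t$: the left side becomes $2\,f_1(t,t)\,g_1(t,t)$, and since $f_1,g_1$ are homogeneous this equals $2\,f_1(1,1)\,g_1(1,1)\,t^{N}$ with $N=\deg f_1+\deg g_1$. Hence it suffices to check that the coefficient sums $f_1(1,1)$ and $g_1(1,1)$ of the $v_{T_1}$--components of $s_+$ and $s_-$ are both nonzero, whence $c_1=c_2=0$. (Equivalently, one checks $\mathrm{im}(\beta_1)\cap\mathrm{im}(\beta_2)=0$: as $s_\pm$ are singular, $\HH s_\pm=\CC[x_1,x_2]s_\pm$, and this intersection condition unwinds to the very same identity.)

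The main obstacle is this last nonvanishing statement, namely $f_1(1,1)\neq0$ and $g_1(1,1)\neq0$ in every regime. The coefficients in Proposition \ref{sing3} are rational in $c_0$ with poles, so one must treat the parity of the relevant $k$ separately and, when $c_0$ is an integer annihilating a denominator, work instead with the rescaled polynomials $(c_0\mp m)\,p$ and verify that their diagonal evaluations remain nonzero; here the standing assumption that $c_0$ is a nonzero integer is used (one should note that the two extreme coefficients of $s_+$ and $s_-$ actually cancel, so the top term of $f_1\tilde g_1+\tilde f_1 g_1$ vanishes and a diagonal, rather than leading--term, comparison is essential). Finally, the case $c_0<0$ follows from the $c_0\mapsto-c_0$ symmetry interchanging Propositions \ref{sing1}/\ref{sing2} and cases $(2)$/$(3)$ of Proposition \ref{sing3}.
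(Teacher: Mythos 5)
Your construction of the two candidate homomorphisms is exactly the paper's: the four hypotheses are read off as the four one-step conditions of Theorem \ref{th1}, and the two maps are the composites $\Delta(\lambda_{i,k})\to\Delta(\lambda_i)\to\Delta(\lambda_{i,j})$ and $\Delta(\lambda_{i,k})\to\Delta(\lambda^i)\to\Delta(\lambda_{i,j})$, with the upper bound $\dim\le 2$ quoted from the introduction. Where you diverge is the linear-independence step. The paper simply writes down the two composite images explicitly as products of the singular polynomials of Propositions \ref{sing1}--\ref{sing3} and compares their leading terms, observing in particular that the $v_{T_2}$-components of the two composites carry opposite signs on the extremal monomials; no further identity among the coefficients $a_l$ is needed. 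You instead exploit the $\tau$-symmetry of $s_+$ and $\tau$-antisymmetry of $s_-$ to reduce independence to the single polynomial identity $f_1\tilde g_1+\tilde f_1 g_1\neq 0$, and then to the diagonal evaluations $f_1(1,1)\neq 0$, $g_1(1,1)\neq 0$.

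That last step is a genuine gap, and it is not a small one: in your reduction the entire content of linear independence has been concentrated into the claim that $1+\sum_{l=1}^{k}a_l\neq 0$ for the coefficient sequences of cases $(2)$ and $(3)$ of Proposition \ref{sing3} (and, at the integer values of $c_0$ killing a denominator, that the corresponding residue sums of the rescaled polynomials $(c_0\mp m)p$ are nonzero). These are nontrivial closed-form sums of products of binomial-type ratios in $c_0$ and $k$; nothing in your argument establishes them, and you yourself flag this as ``the main obstacle.'' Your own observation that the extreme coefficients of $s_+$ and $s_-$ cancel, so that no leading-term shortcut is available for the product $f_1\tilde g_1+\tilde f_1 g_1$, shows that the diagonal nonvanishing really must be proved and cannot be waved through. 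Until that is done the proof is incomplete; either supply the evaluation of these sums (including the pole cases), or switch to the paper's route of comparing the leading monomials of the two composite images directly, which sidesteps the identity altogether.
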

\begin{proof}
We have that this fourth condition allows the construction of morphisms between some standard modules. In particular we have that
\begin{center}
\begin{tabular}{|c|c|c|}
  \hline
  1 & $d_i-d_k+c_0r$ & $\Delta(\lambda_{i,k})\rightarrow \Delta(\lambda_i)$ \\
  2 & $d_i-d_k-c_0r$ & $\Delta(\lambda_{i,k})\rightarrow \Delta(\lambda^i)$ \\
  3 & $d_j-d_i+c_0r$ & $\Delta(\lambda^{i})\rightarrow \Delta(\lambda_{i,j})$ \\
  4 & $d_j-d_i-c_0r$ & $\Delta(\lambda_{i})\rightarrow \Delta(\lambda_{i,j})$ \\
  \hline
\end{tabular}
\end{center}
 We can see that we have two ways to go from $\Delta(\lambda_{i,k})$ to $\Delta(\lambda_{i,j})$. We are proving that these two ways are linearly independent. For this we see the leading terms of each of these morphisms. In order to compute the leading terms of the singular polynomials involved, we need to consider that if $c_0$ is an integer, it could change the leading terms. Suppose that $c_0>0$. The leading term can be calculated using the singular polynomials:
\begin{itemize}
  \item $x_1^{d_j-d_i+c_0r-l^\prime r}x_2^{l^\prime r}$ for case 1.
  \item $x_1^{d_j-d_i-c_0r}$ for case 2.
  \item $x_1^{lr}x_2^{d_i-d_k+c_0r-lr}\otimes v_{T_1}-x_1^{d_i-d_k+c_0r-lr}x_2^{lr}\otimes v_{T_2}$ for case 3.
  \item $x_2^{d_i-d_k-c_0r}\otimes v_{T_1}+x_1^{d_i-d_k-c_0r}\otimes v_{T_2}$ for case 4.
\end{itemize}
where $l$ and $l^\prime$ are integers. The composition of the morphisms follows by multiplying the polynomials. The leading terms of the compositions are:
\begin{itemize}
\item For the composition $4\circ 1$
\begin{center}
$(x_1^{d_j-d_i+c_0r-l^\prime r}x_2^{l^\prime r})(x_2^{d_i-d_k-c_0r}\otimes v_{T_1}+x_1^{d_i-d_k-c_0r}\otimes v_{T_2})=$ $x_1^{d_j-d_i+c_0r-l^\prime r}x_2^{d_i-d_k-c_0r+l^\prime r}\otimes v_{T_1}+x_1^{d_j-d_k-l^\prime r}x_2^{l^\prime r}\otimes v_{T_2}$
 \end{center}
 \item For the composition $3\circ 2$
 \begin{center}
 $(x_1^{d_j-d_i-c_0r})(x_1^{lr}x_2^{d_i-d_k+c_0r-lr}\otimes v_{T_1}-x_1^{d_i-d_k+c_0r-lr}x_2^{lr}\otimes v_{T_2})=$ $x_1^{d_j-d_i-c_0r+lr}x_2^{d_i-d_k+c_0r-lr}\otimes v_{T_1}-x_1^{d_j-d_k-lr}x_2^{lr}\otimes v_{T_2}.$
 \end{center}
\end{itemize}
 If we compare this two terms we can see that they are linearly independent. In conclusion  we have two linearly independent ways to go from $\Delta(\lambda_{i,k})$ to $\Delta(\lambda_{i,j})$. This implies that the dimension of the space of homomorphism is 2.
\end{proof}
\section{Example} \label{SecEx}
In this section we give an explicit example.
\begin{example}
For this example we work with $r=3$. Suppose first that $10-d_0+d_2=0$. This is a condition of the form $d_0-d_2$ and allows the construction of some morphisms.
   Next we add the condition $5-d_0+d_1=0$. This is a condition from the form $d_0-d_1$. With these two conditions we can form a new one by subtracting the second condition from the first one. This new condition is $5-d_1+d_2=0$ and is from the form $d_1-d_2$. This way it is possible to construct more morphisms.
  Finally, if we add the condition $c_0=1$, then we have 6 new conditions
\begin{center}
    $\begin{array}{rcr}
       13-d_0+d_2-c_0r=0 & \hfill & (d_0-d_2+c_0r) \\
       7-d_0+d_2+c_0r=0 & \hfill & (d_0-d_2-c_0r) \\
       8-d_0+d_1-c_0r=0 & \hfill & (d_0-d_1+c_0r) \\
       2-d_0+d_1+c_0r=0 & \hfill & (d_0-d_1-c_0r) \\
       8-d_1+d_2-c_0r=0 & \hfill & (d_1-d_2+c_0r) \\
       2-d_1+d_2+c_0r=0 & \hfill & (d_1-d_2-c_0r)
     \end{array}
    $\end{center}
and this allows the construction of 12 new morphisms. The following table shows all the morphisms constructed by the corresponding conditions.\\
\begin{center}
\begin{tabular}{|c|c|}
      \hline
      $d_0-d_2$ & $\begin{array}{lll|l}
                 \Delta(\lambda_2) & \rightarrow & \Delta(\lambda_0) & 1\\
                 \Delta(\lambda^2) & \rightarrow & \Delta(\lambda^0) & 2\\
                 \Delta(\lambda_{1,2}) & \rightarrow & \Delta(\lambda_{0,1}) & 3
               \end{array}$ \\ \hline
      $\begin{array}{c}
         \textcolor[rgb]{0.98,0.00,0.00}{d_0-d_1} \\
         \textcolor[rgb]{0.98,0.00,0.00}{d_1-d_2}
       \end{array}
      $ & $\textcolor[rgb]{0.98,0.00,0.00}{\begin{array}{lll|l}
             \Delta(\lambda_2) & \rightarrow & \Delta(\lambda_1) & 4\\
             \Delta(\lambda_1) & \rightarrow & \Delta(\lambda_0) & 5\\
             \Delta(\lambda^2) & \rightarrow & \Delta(\lambda^1) & 6\\
             \Delta(\lambda^1) & \rightarrow & \Delta(\lambda^0) & 7 \\
             \Delta(\lambda_{1,2}) & \rightarrow & \Delta(\lambda_{0,2}) & 8 \\
             \Delta(\lambda_{0,2}) & \rightarrow & \Delta(\lambda_{0,1}) & 9
           \end{array}}
      $ \\ \hline
      $\textcolor[rgb]{0.00,0.00,1.00}{\begin{array}{c}
                                              c_0=1 \\
                                         d_0-d_2+c_0r \\
                                         d_0-d_2-c_0r \\
                                         d_0-d_1+c_0r \\
                                         d_0-d_1-c_0r \\
                                         d_1-d_2+c_0r \\
                                         d_1-d_2-c_0r
                                       \end{array}
      }$ & $\textcolor[rgb]{0.00,0.00,1.00}{\begin{array}{lll|r}
                                              \Delta(\lambda_{0,1}) & \rightarrow & \Delta(\lambda_0) & 10 \\
                                              \Delta(\lambda_2) & \rightarrow & \Delta(\lambda_{1,2}) & 11 \\
                                              \Delta(\lambda_{0,1}) & \rightarrow & \Delta(\lambda^0) & 12 \\
                                              \Delta(\lambda^2) & \rightarrow & \Delta(\lambda_{1,2}) & 13 \\
                                              \Delta(\lambda_2) & \rightarrow & \Delta(\lambda_{0,2}) & 14 \\
                                              \Delta(\lambda_{1,2}) & \rightarrow & \Delta(\lambda_1) & 15 \\
                                              \Delta(\lambda_{1,2}) & \rightarrow & \Delta(\lambda^1) & 16 \\
                                              \Delta(\lambda^1) & \rightarrow & \Delta(\lambda_{0,1}) & 17 \\
                                              \Delta(\lambda_1) & \rightarrow & \Delta(\lambda_{0,1}) & 18 \\
                                              \Delta(\lambda_{0,2}) & \rightarrow & \Delta(\lambda_0) & 19 \\
                                              \Delta(\lambda_{0,2}) & \rightarrow & \Delta(\lambda^0) & 20 \\
                                              \Delta(\lambda^2) & \rightarrow & \Delta(\lambda_{0,2}) & 21
                                                                                          \end{array}
      }$ \\
      \hline
    \end{tabular}
   \end{center}

In this last table we have enumerated the morphisms and we obtain the following diagram\\
\begin{center}
    \includegraphics[scale=0.8]{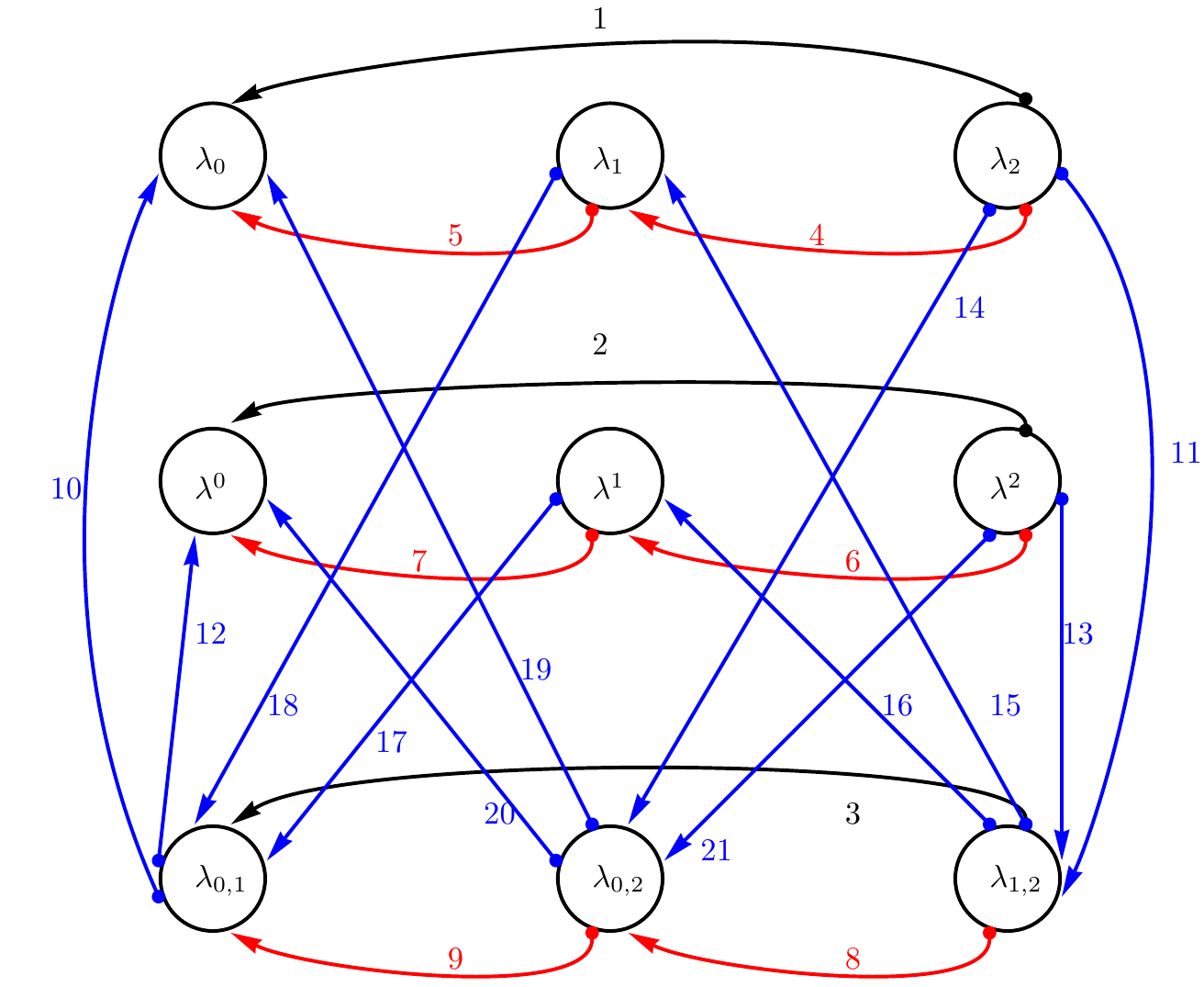}\\
\end{center}
Now we describe each of the 21 morphisms using the singular polynomials. All the computations are using the three imposed conditions. If we delete one of the conditions, the polynomials could change.
\begin{center}
\begin{tabular}{|l|l|}
  \hline
  1 & $x_1^{10}x_2^{10}$ \\
  2 & $x_1^{10}x_2^{10}$  \\
  3 & $x_1^{5}x_2^{5}\otimes v_{T_1}$  \\
  4 & $x_1^{5}x_2^{5}$  \\
  5 & $x_1^{5}x_2^{5}$  \\
  6 & $x_1^{5}x_2^{5}$  \\
  7 & $x_1^{5}x_2^{5}$  \\
  8 & $(x_1^{5}+\frac{1}{6}x_1^2x_2^3)\otimes v_{T_1}+(\frac{1}{3}x_1^4x_2+\frac{1}{2}x_1x_2^4)\otimes v_{T_2}$  \\
  9 & $(x_2^{5}+\frac{1}{6}x_1^3x_2^2)\otimes v_{T_1}-(\frac{1}{2}x_1x_2^4+\frac{1}{3}x_1^4x_2)\otimes v_{T_2}$  \\
  10 & $x_1^8-x_1^2x_2^6-2x_1^5x_2^3$  \\
  11 & $x_1^2\otimes v_{T_1}+x_2^2\otimes v_{T_2}$  \\
  12 & $x_1^2$  \\
  13 & $(x_1^8-3x_1^5x_2^3-x_1^2x_2^6)\otimes v_{T_1}-(x_2^8-3x_1^3x_2^5-x_1^6x_2^2)\otimes v_{T_2}$  \\
  14 & $(x_1^7+\frac{2}{3}x_1^4x_2^3+\frac{1}{3}x_1x_2^6)\otimes v_{T_1}-(x_2^7+\frac{2}{3}x_1^3x_2^4+\frac{1}{3}x_1^6x_2)\otimes v_{T_2}$  \\
  15 & $x_1^8-x_1^2x_2^6-2x_1^5x_2^3$  \\
  16 & $x_1^2$  \\
  17 & $(x_1^8-2x_1^5x_2^3-x_1^2x_2^6)\otimes v_{T_1}-(x_2^8-2x_1^3x_2^5-x_1^6x_2^2)\otimes v_{T_2}$  \\
  18 & $x_1^2\otimes v_{T_1}+x_2^2\otimes v_{T_2}$  \\
  19 & $x_1^{13}-\frac{1}{3}x_1x_2^{12}-\frac{4}{3}x_1^{10}x_2^3$  \\
  20 & $x_1^7+\frac{1}{5}x_1x_2^6-\frac{2}{5}x_1^4x_2^3$  \\
  21 & $(x_1^{13}-\frac{1}{3}x_1x_2^{12}-\frac{4}{3}x_1^{10}x_2^3)\otimes v_{T_1}-(x_2^{13}-\frac{1}{3}x_1^{12}x_2-\frac{4}{3}x_1^{3}x_2^{10})\otimes v_{T_2}$  \\
  \hline
\end{tabular}
\end{center}

There are many morphisms that can be constructed using other morphisms. If we delete from the diagram all the morphisms that come from other morphisms, we will get the following diagram
\begin{center}
\includegraphics[scale=0.8]{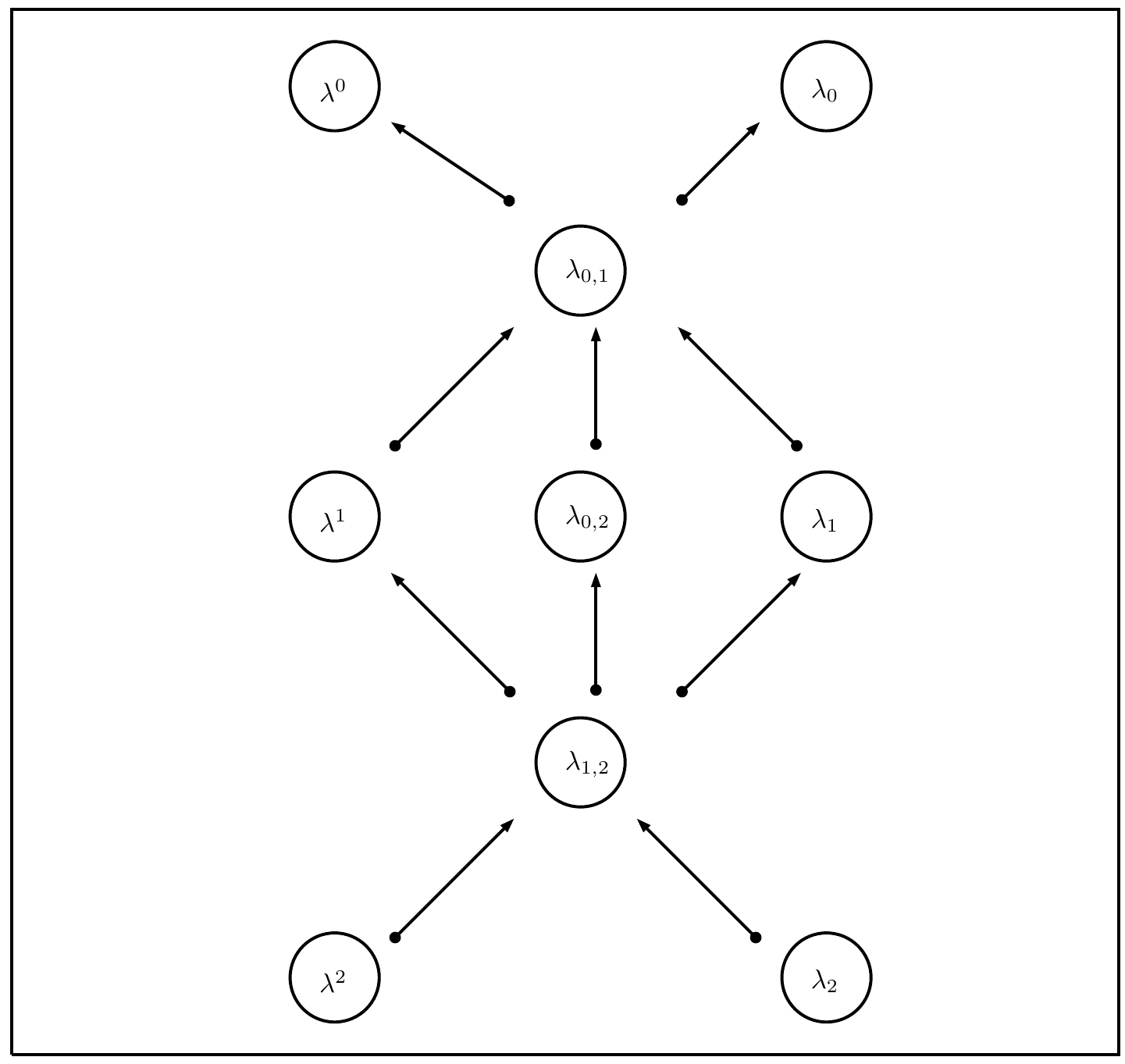}
\end{center}

For the three morphisms from $\Delta(\lambda_{1,2})$ to $\Delta(\lambda_{0,1})$, only two of them are linearly independent. We can also see that the diagram is self-dual and graded. Is it true that the canonical coarsest order is always self-dual and graded?
\end{example}

\newpage
\bibliographystyle{amsplain}
\def\cprime{$'$} \def\cprime{$'$}

\end{document}